\theoremstyle{plain}
\newtheorem{lem}{Lemma}[section]
\newtheorem{cor}[lem]{Corollary}
\newtheorem{prop}[lem]{Proposition}
\newtheorem{thm}[lem]{Theorem}
\theoremstyle{definition}
\newtheorem{ex}[lem]{Example}
\newtheorem{question}[lem]{Question}
\newtheorem{disc}[lem]{Remark}
\newtheorem{fact}[lem]{Fact}
\newtheorem*{convention*}{Convention}
\newtheorem*{mthm*}{Main Theorem}
\newcommand{\catd}{\mathcal{D}}
\newcommand{\pd}{\operatorname{pd}}	
\newcommand{\gdim}{\mathrm{G}\text{-}\!\dim}
\newcommand{\id}{\operatorname{id}}
\newcommand{\depth}{\operatorname{depth}}
\newcommand{\ann}{\operatorname{Ann}}
\newcommand{\soc}{\operatorname{Soc}}
\newcommand{\HH}{\operatorname{H}}
\newcommand{\im}{\operatorname{Im}}
\newcommand{\shift}{\mathsf{\Sigma}}
\newcommand{\Ker}{\operatorname{Ker}}
\newcommand{\ideal}[1]{\mathfrak{#1}}
\newcommand{\m}{\ideal{m}}
\newcommand{\p}{\ideal{p}}
\newcommand{\q}{\ideal{q}}
\newcommand{\fm}{\ideal{m}}
\newcommand{\ol}{\overline}
\newcommand{\wti}{\widetilde}
\newcommand{\ass}{\operatorname{Ass}}
\newcommand{\bbz}{\mathbb{Z}}
\newcommand{\xra}{\xrightarrow}
\newcommand{\xla}{\xleftarrow}
\renewcommand{\geq}{\geqslant}
\renewcommand{\leq}{\leqslant}
\renewcommand{\ker}{\Ker}
\newcommand{\Ext}[4][R]{\operatorname{Ext}_{#1}^{#2}(#3,#4)}	
\newcommand{\Rhom}[3][R]{\mathbf{R}\!\operatorname{Hom}_{#1}(#2,#3)}	
\newcommand{\Lotimes}[3][R]{#2\otimes^{\mathbf{L}}_{#1}#3}
\newcommand{\Otimes}[3][R]{#2\otimes_{#1}#3}
\newcommand{\Hom}{\operatorname{Hom}}	
\newcommand{\Tor}[4][R]{\operatorname{Tor}^{#1}_{#2}(#3,#4)}
\def\Tor{\operatorname{Tor}}
\def\Ext{\operatorname{Ext}}
\numberwithin{equation}{lem}
\begin{document}

\bibliographystyle{amsplain}

\title{Vanishing of Ext and Tor over fiber products}

\author{Saeed Nasseh}

\address{Saeed Nasseh,
Department of Mathematical Sciences,
Georgia Southern University,
Statesboro, Georgia 30460, USA}

\email{snasseh@georgiasouthern.edu}
\urladdr{https://cosm.georgiasouthern.edu/math/saeed.nasseh}

\author{Sean Sather-Wagstaff}

\address{Sean Sather-Wagstaff,
Department of Mathematical Sciences,
Clemson University,
O-110 Martin Hall, Box 340975,
Clemson, S.C. 29634,
USA}

\email{ssather@clemson.edu}
\urladdr{https://ssather.people.clemson.edu/}

\thanks{Sather-Wagstaff  was supported in part by North Dakota EPSCoR,
National Science Foundation Grant EPS-0814442,
and NSA grant H98230-13-1-0215.}




\keywords{Auslander-Reiten, Ext-index, Ext-vanishing, fiber product, injective dimension, projective dimension, semidualizing complexes, Tor-vanishing}
\subjclass[2010]{13D02, 13D05, 13D07, 13D09
}

\begin{abstract}
Consider a non-trivial fiber product $R=S\times_kT$ of local rings $S$, $T$ with common residue field $k$. Given two finitely generate $R$-modules $M$ and $N$, we show that if $\operatorname{Tor}^R_i(M,N)=0=\operatorname{Tor}^R_{i+1}(M,N)$ for some $i\geq 5$, then $\operatorname{pd}_R(M)\leq 1$ or $\operatorname{pd}_R(N)\leq 1$. From this, we deduce several consequence, for instance, that $R$ satisfies the Auslander-Reiten Conjecture.
\end{abstract}

\maketitle

\section{Introduction}\label{sec160319a}

Throughout this paper, let $(S,\fm_S,k)$ and $(T,\fm_T,k)$ be commutative  local (meaning local and noetherian) rings.
Let $S\xra{\pi_S} k\xla{\pi_T}T$ denote the natural surjections onto the common residue field, and
assume that $S\neq k\neq T$. Throughout, let $R$ denote the fiber product (i.e., the pull-back) ring
$$R:=S\times_kT=\left\{(s,t)\in S\times T\mid \pi_S(s)=\pi_T(t)\right\}.$$
It is also
commutative and local with maximal ideal $\m_R=\m_S\oplus\m_T$ and residue field $k$.
And it is universal with respect to the next
commutative diagram.
\begin{equation}\label{eq120222a}\notag
\begin{split}
\xymatrix{
R\ar[r]\ar[d]&T\ar[d]^{\pi_T}\\
S\ar[r]^{\pi_S}&k
}
\end{split}\end{equation}
Also, the subsets $\m_S,\m_T\subseteq R$ are ideals of $R$, and
we have  ring isomorphisms $S\cong R/\frak m_T$ and $T\cong R/\frak m_S$.

Many results about $R$ state that its properties reflect those of the rings $S$ and $T$.
For instance, Dress and Kr\"amer~\cite[Remark~3]{dress} show
that for every finitely generated $R$-module $N$, the second syzygy of $N$ decomposes as a direct sum
$N'\cong N_1\oplus N_2$ where $N_1$ is a finitely generated $S$-module
and $N_2$ is a finitely generated $T$-module.
Other examples of this include work of Moore~\cite[Theorem~1.8]{moore} which shows how, given a finitely generated $S$-module $M_1$,
one can use the
minimal $S$-free resolution of $M_1$ with the minimal free resolutions of $k$ over $S$ and $T$ to obtain the
minimal $R$-free resolution of $M_1$. (Fact~\ref{fact160316a} below describes part of this construction.)
See, e.g., the papers of Kostrikin and {\v{S}}afarevi{\v{c}}~\cite{kostrikin} and Lescot~\cite{lescot:sbpfal} for more
results in this theme.
The point here is that the module category of $R$ is deeply related to the module categories of $S$ and $T$.

However, there are glaring counterpoints to this theme. For instance,
one consequence of Lescot's work is the equality
\begin{equation}\label{eq160315a}\tag{$\ast$}
\depth(R)=\min\{\depth(S),\depth(T),1\}.
\end{equation}
(See, e.g., Christensen, Striuli, and Veliche~\cite[(3.2.1)]{christensen:gmirlr}.)
Technically, this describes the depth of $R$ in terms of the depths of $S$ and $T$,
but it shows, for instance, that $R$ is almost never Cohen-Macaulay, even when $S$ and $T$ are so.

The  results of this paper  give further counterpoints to this theme.
For instance, the next result is contained in Theorems~\ref{thm160316a} and~\ref{thm160316b}.
In the language of Celikbas and Sather-Wagstaff~\cite{celikbas:tgp} this result says that every $R$-module of
infinite projective dimension is a ``pd-test module''.

\begin{thm}\label{thm160319a}
Let $M$ and $N$ be finitely generated modules over the fiber product~$R$.
\begin{enumerate}[\rm(a)]
\item \label{thm160319a1}
If $\depth(S)=0$ or $\depth(T)=0$ and
$\Tor^R_i(M,N)=0$ for some $i\geq 5$,
then $M$ or $N$ is free over $R$.
\item \label{thm160319a2}
In general, if
$\Tor^R_i(M,N)=0=\Tor^R_{i+1}(M,N)$ for some $i\geq 5$, then
$\pd_R(M)\leq 1$ or $\pd_R(N)\leq 1$.
\end{enumerate}
\end{thm}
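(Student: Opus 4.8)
The plan is to reduce the statement, via dimension shifting and the Dress--Kr\"amer decomposition, to a single rigidity statement about $\Tor$ between an $S$-module and a $T$-module, and then to prove that statement from the structure of minimal $R$-free resolutions.

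First I would reduce to the case that $S$ and $T$ are complete, which changes neither $\Tor$ nor projective dimension. Write $\Omega^i_R(-)$ for the $i$th syzygy in a minimal $R$-free resolution; since the resolution is minimal, $\pd_R M\leq 1$ is equivalent to $\Omega^2_R M=0$. By the Dress--Kr\"amer decomposition I may write $\Omega^2_R M\cong M_1\oplus M_2$ and $\Omega^2_R N\cong N_1\oplus N_2$, where $M_1,N_1$ are finitely generated $S$-modules and $M_2,N_2$ are finitely generated $T$-modules; thus $\pd_R M\leq 1$ exactly when $M_1=M_2=0$. Because $\Tor^R_i(M,N)\cong\Tor^R_{i-4}(\Omega^2_R M,\Omega^2_R N)$ for $i\geq 5$, the hypothesis $\Tor^R_i(M,N)=0=\Tor^R_{i+1}(M,N)$ becomes, after setting $j=i-4\geq 1$ and distributing $\Tor$ over the four summands,
\[
\Tor^R_j(M_a,N_b)=0=\Tor^R_{j+1}(M_a,N_b)\qquad\text{for all }a,b\in\{1,2\}.
\]

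The heart of the argument is then the following rigidity lemma, which I would isolate and prove separately: \emph{if $X$ and $Y$ are nonzero finitely generated $R$-modules, each annihilated by $\m_T$ or by $\m_S$ \textup{(}i.e.\ each is an $S$-module or a $T$-module\textup{)}, then the sequence $\bigl(\Tor^R_j(X,Y)\bigr)_{j\geq 1}$ has no two consecutive zero terms.} Granting this, each pair $(a,b)$ with $M_a\neq 0\neq N_b$ is excluded by the displayed double vanishing, so for every $a,b$ we have $M_a=0$ or $N_b=0$. A short case analysis finishes: if some $M_a\neq 0$, then $N_b=0$ for both $b$, so $N_1=N_2=0$ and $\pd_R N\leq 1$; otherwise $M_1=M_2=0$ and $\pd_R M\leq 1$. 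In the special case $\depth S=0$ or $\depth T=0$, part~(a) already yields the stronger conclusion that one of $M,N$ is free, so the two-step hypothesis is only genuinely needed when $\depth R=1$.

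To prove the rigidity lemma I would build the minimal $R$-free resolutions of $X$ and $Y$ from Moore's construction (see Fact~\ref{fact160316a}), using the syzygy formula $\Omega^1_R(X)\cong\Omega^1_S(X)\oplus\m_T^{\mu(X)}$ for an $S$-module $X$ and its symmetric analogue for a $T$-module. Iterating these shows that the syzygies of $X$ and $Y$ reproduce, up to free modules, copies of $\m_S$, $\m_T$, and of the $S$- and $T$-syzygies of $k$, so that $\Tor^R_j(X,Y)$ is computed by a complex assembled from $\Tor^S_\bullet(X,k)$, $\Tor^T_\bullet(Y,k)$, and the self-similar contributions forced by the cross relation $\m_S\m_T=0$. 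The key computational phenomenon, visible already in $\Tor^R_2(S,T)\cong k^{\edim S\,\edim T}$ against the isolated zero $\Tor^R_1(S,T)=0$, is that each nonzero syzygy pair contributes a nonzero $k$-multiplicity in degree $j$ or in degree $j+1$. The main obstacle is to organize these contributions into a recursion and to show that two consecutive cancellations are impossible unless $X=0$ or $Y=0$; this is exactly where both the range $i\geq 5$ (ensuring $j\geq 1$) and the two-step vanishing are used, since isolated zeros such as the one above genuinely occur.
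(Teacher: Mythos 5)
Your overall architecture---pass to syzygies, decompose them via Dress--Kr\"amer into an $S$-module plus a $T$-module, and reduce everything to a rigidity statement for $\Tor^R$ between such modules---is essentially the paper's strategy (Fact~\ref{fact160316b} together with Lemmas~\ref{lem160316c} and~\ref{lem160316d}). However, there are two genuine gaps.

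First, your ``rigidity lemma'' is the entire content of the theorem, and you do not prove it: you close by declaring that the ``main obstacle'' is to show that two consecutive cancellations are impossible, which is exactly the statement that needs proof. The missing ingredients are concrete. From Moore's resolution one computes $\Tor^R_1$ explicitly (Lemma~\ref{lem160316a}) and deduces that $\Tor^R_1(X,Y)\neq0$ whenever $X$ and $Y$ are two nonzero $S$-modules, or two nonzero $T$-modules (Lemma~\ref{lem160316b}); this disposes of every pair except the mixed one. In the mixed case ($X$ a nonzero $S$-module, $Y$ a nonzero $T$-module) the term $\Tor^R_1(X,Y)$ genuinely can vanish (Remark~\ref{disc160317a}), and the decisive observation is that the first $R$-syzygy of $X$ has a nonzero $T$-module direct summand, since it contains $\m_TR^{\beta_0}\neq0$ (see~\eqref{eq160316c}); pairing that summand with $Y$ via the previous sentence gives $\Tor^R_2(X,Y)\neq0$. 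Your sketch gestures at a ``recursion,'' but this single computation is what terminates it, and it never appears.

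Second, part~(a) is not addressed. Your argument consumes both vanishings, and the sentence ``part~(a) already yields the stronger conclusion'' is circular, since part~(a) is one of the two assertions to be proved: under $\depth(S)=0$ or $\depth(T)=0$ a \emph{single} vanishing must suffice, and the conclusion is freeness rather than $\pd\leq1$. Here the paper uses precisely the hypothesis you dropped from your rigidity lemma, namely that the Dress--Kr\"amer summands are \emph{first syzygies} over $S$, resp.\ $T$. In the mixed case, $\Tor^R_1(X,Y)=0$ with $X,Y\neq0$ forces $X$ to be $S$-free and $Y$ to be $T$-free; but a nonzero free module cannot be a first syzygy over a depth-zero ring, since an embedding $S\into\m_SS^m$ is impossible when $\soc(S)\neq0$ (Lemma~\ref{lem160316c}). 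The final upgrade from $\pd_R<\infty$ to freeness via Auslander--Buchsbaum and $\depth(R)=0$ is also missing from your write-up.
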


This result gives another counterpoint to the above theme because,
even if $S$ and $T$ have Tor-independent modules of infinite projective dimension
(e.g., if $S=k[U,V,X,Y]/((U,V)^2+(X,Y)^2)\cong T$),
the theorem shows that $R$ will not have such modules.
Section~\ref{sec160319b} is primarily devoted to the proof of this result.
In the subsequent Section~\ref{sec160325a}, we explore the consequences for ``depth formulas'' over $R$;
some of these are expected, others are surprising to us.

The final Section~\ref{sec160319d} documents Ext-vanishing results that follow from Theorem~\ref{thm160319a}.
For instance, the next result, contained in Theorem~\ref{cor160316b}, shows that the fiber product $R$
satisfies the Auslander-Reiten Conjecture, regardless of whether we know it for $S$ or $T$.

\begin{thm}\label{thm160319b}
Let $M$ be a finitely generated module  over the fiber product $R$.
If $\Ext_R^i(M,M\oplus R)=0$ for $i\geq 1$, then $M$ is $R$-free.
\end{thm}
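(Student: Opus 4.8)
The plan is to reduce the Ext-vanishing hypothesis $\Ext_R^i(M,M\oplus R)=0$ for all $i\geq 1$ to an application of Theorem~\ref{thm160319a}(b), which is the workhorse result of the paper. First I would observe that the hypothesis splits into two conditions: $\Ext_R^i(M,R)=0$ for $i\geq 1$ and $\Ext_R^i(M,M)=0$ for $i\geq 1$. The former says precisely that $M$ is a \emph{totally reflexive} (Gorenstein-projective) module, i.e., $\gdim_R(M)=0$ unless $M$ is free; over the fiber product $R$ this is already a strong restriction. The latter is the self-Ext-vanishing condition at the heart of the Auslander-Reiten Conjecture. My goal is to convert this Ext-vanishing into the Tor-vanishing hypothesis needed to invoke Theorem~\ref{thm160319a}(b), and then to rule out the case $\pd_R(M)\leq 1$ by a separate argument, forcing $M$ to be free.

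The key mechanism is to pass from Ext-vanishing against $M$ to Tor-vanishing involving a dual module. Since $\Ext_R^i(M,R)=0$ for all $i\geq 1$, the module $M$ has an associated dual $M^{\ast}=\Hom_R(M,R)$ that behaves well homologically, and for totally reflexive modules one has the standard duality relating $\Ext_R^i(M,M)$ to $\Tor_i^R(M^{\ast},M)$ (or to an analogous Ext/Tor interchange). Concretely, I would use that for a totally reflexive $M$ the condition $\Ext_R^i(M,M)=0$ for $i$ in a suitable range translates into $\Tor_i^R(M^{\ast},M)=0$ in the corresponding range. Once I have produced two consecutive vanishing Tor's $\Tor_i^R(M^{\ast},M)=0=\Tor_{i+1}^R(M^{\ast},M)$ for some $i\geq 5$ — which follows immediately from the hypothesis, since Ext vanishes in all degrees — Theorem~\ref{thm160319a}(b) applies and yields $\pd_R(M^{\ast})\leq 1$ or $\pd_R(M)\leq 1$.

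It remains to close the loop. If $\pd_R(M)\leq 1$, then because $M$ is totally reflexive (so $\gdim_R(M)=0$) and totally reflexive modules of finite projective dimension are free, I conclude that $M$ is free. If instead $\pd_R(M^{\ast})\leq 1$, then since $M^{\ast}$ is the dual of a totally reflexive module it is itself totally reflexive, so the same argument gives that $M^{\ast}$ is free; then $M\cong M^{\ast\ast}$ is free as well. Either way $M$ is $R$-free, as claimed.

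The main obstacle I anticipate is the Ext-to-Tor translation step. The clean duality $\Ext_R^i(M,M)\cong\Tor_i^R(M^{\ast},M)$ is valid for totally reflexive modules but requires care about indexing, about the exactness of the complete resolution used, and about whether one works with a genuine duality or only an inequality of vanishing ranges; I would need to verify that the vanishing of \emph{all} higher self-Ext (not merely two consecutive degrees) is enough to guarantee two consecutive vanishing Tor's in degrees $\geq 5$, which it should be since the hypothesis is so strong. A secondary subtlety is ensuring that the case distinction $\pd_R(M)\leq 1$ versus $\pd_R(M^{\ast})\leq 1$ is genuinely symmetric, i.e., that $M^{\ast}$ inherits total reflexivity and that finite projective dimension of a totally reflexive module forces freeness over this non-Cohen-Macaulay ring $R$; this last fact is standard but I would cite it explicitly to keep the argument self-contained.
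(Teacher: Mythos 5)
Your reduction breaks down at two load-bearing points. First, the assertion that $\Ext^i_R(M,R)=0$ for all $i\geq 1$ ``says precisely that $M$ is totally reflexive'' is not correct: total reflexivity also requires $M\cong\Hom_R(\Hom_R(M,R),R)$ and $\Ext^i_R(\Hom_R(M,R),R)=0$ for $i\geq 1$, and these extra conditions do not follow from the vanishing of $\Ext^i_R(M,R)$ alone (this is exactly the phenomenon behind the Jorgensen--\c{S}ega examples and behind the notion of G-regularity appearing in Corollary~\ref{cor160325a}). Second, and more seriously, the proposed identification $\Ext^i_R(M,M)\cong\Tor^R_i(M^{\ast},M)$ is not the standard duality even for a genuinely totally reflexive $M$. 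If $T$ is a complete resolution of $M$, then $\Hom_R(T,R)$ is a shifted complete resolution of $M^{\ast}$, and the isomorphism $\Hom_R(T,N)\cong\Hom_R(T,R)\otimes_RN$ gives $\Tor^R_i(M^{\ast},N)\cong\widehat{\operatorname{Ext}}^{-i-1}_R(M,N)$ for $i\geq 1$: ordinary positive-degree Tor of $M^{\ast}$ corresponds to \emph{negative}-degree Tate cohomology of $M$, which is computed from the coresolution half of $T$ and is not controlled by the vanishing of $\Ext^i_R(M,M)$ for $i\geq 1$. So the hypothesis cannot be converted into the two consecutive Tor-vanishings needed for Theorem~\ref{thm160319a}\eqref{thm160319a2} by this route; the ``main obstacle'' you flagged is fatal rather than a matter of indexing.

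For comparison, the paper passes from Ext-vanishing to Tor-vanishing by a different duality. In Proposition~\ref{prop160316a} one replaces $N$ by $K\otimes_RN$, where $K$ is the Koszul complex on $\m_R$, so that the Matlis dual $\Rhom{K\otimes_RN}{E}$ is homologically finite, and Hom-evaluation gives $\Rhom{\Rhom M{K\otimes_RN}}E\simeq\Lotimes M{\Rhom{K\otimes_RN}E}$; Theorem~\ref{thm160316b} then forces $\pd_R(M)<\infty$ or $\id_R(N)<\infty$. Theorem~\ref{cor160316b} follows from the resulting bound on the Ext-index of $R$ (Corollary~\ref{cor160316c}) together with \cite[Theorem~2.3]{christensen:asacvc}. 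If you want to salvage your outline, note that the genuinely hard case is not $\pd_R(M)\leq 1$ (there, $\Ext^i_R(M,R)=0$ for $i\geq 1$ immediately forces $M$ free) but $\id_R(M)\leq 1$ with $\pd_R(M)=\infty$, and that is precisely the case the citation to Christensen--Holm is there to handle.
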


\section{Tor-Vanishing}\label{sec160319b}

In this section we prove Theorem~\ref{thm160319a} from the introduction,
beginning with some preliminary facts.
Recall that $R$ is a non-trivial fiber product, as described above.

\begin{fact}[\protect{\cite[Remark~3]{dress}}]\label{fact160316b}
For every finitely generated $R$-module $N$, the second syzygy of $N$ decomposes as a direct sum
$N'\cong N_1\oplus N_2$ where $N_1$ is a finitely generated $S$-module
and $N_2$ is a finitely generated $T$-module; in other words, $N_1$
is a finitely generated $R$-module annihilated by $\m_T$, and similarly for $N_2$.
Moreover, the proof of~\cite[Remark~3]{dress} shows the following.
The syzygy $N'$ is a submodule of a finite-rank free $R$-module $R^n$, as the image of an $R$-linear map $f\colon R^m\to R^n$,
and we have $N_2=\im(f)\cap\m_TR^n\subseteq\m_TR^n\cong\m_TT^n$, and similarly for $N_1$.
In particular, $N_2$ is a first syzygy over $T$ and $N_1$ is a first syzygy over $S$.
\end{fact}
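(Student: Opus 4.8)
The plan is to reproduce the argument of Dress and Kr\"amer, deriving the decomposition from the way a minimal free resolution interacts with the two ideals $\m_S,\m_T\subseteq R$. The structural facts I will use are those recorded in the introduction: $\m_R=\m_S\oplus\m_T$ as an internal direct sum of ideals (so that $\m_S F\cap\m_T F=0$ for every free module $F$), the isomorphisms $R/\m_T\cong S$ and $R/\m_S\cong T$, and the vanishing $\m_S\m_T=0$, which is immediate from the pullback description since $(s,0)(0,t)=(0,0)$ in $R$.

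First I would fix a minimal free resolution $\cdots\to F_2\xra{f}F_1\xra{d}F_0\to N\to 0$ with $F_1=R^n$ and $F_0=R^p$. Minimality at the relevant spots gives $N'=\im(f)=\ker(d)\subseteq\m_R F_1=\m_S F_1\oplus\m_T F_1$, which is precisely the presentation $f\colon R^m\to R^n$ asserted in the statement.

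The heart of the matter is to show that $N'$ respects this ambient splitting. Given $x\in N'$, write $x=x_S+x_T$ with $x_S\in\m_S F_1$ and $x_T\in\m_T F_1$. Because $\m_S$ is an ideal of $R$, the coordinates of $d(x_S)$ lie in $R\m_S=\m_S$, so $d(x_S)\in\m_S F_0$, and symmetrically $d(x_T)\in\m_T F_0$. Since $0=d(x)=d(x_S)+d(x_T)$ and $\m_S F_0\cap\m_T F_0=0$, both summands vanish, so $x_S,x_T\in\ker(d)=N'$. Thus $N'=N_1\oplus N_2$ with $N_1=N'\cap\m_S R^n$ and $N_2=N'\cap\m_T R^n$, exactly the intersections in the statement. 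I expect this to be the main point to get right: it is essential that we work with the second syzygy, whose differential $d$ maps into a \emph{free} module $F_0$, where $\m_S F_0\cap\m_T F_0=0$; for the first syzygy the corresponding map lands in $N$, and $\m_S N\cap\m_T N$ need not vanish, so the same computation breaks down.

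It then remains to identify the two summands. Since $\m_T\m_S=0$, the submodule $N_1\subseteq\m_S F_1$ is annihilated by $\m_T$ and hence is a module over $R/\m_T\cong S$; symmetrically $N_2$ is a module over $R/\m_S\cong T$, giving the asserted splitting into an $S$-module and a $T$-module. Finally, under $R/\m_S\cong T$ the ideal $\m_T\subseteq R$ corresponds to $\m_T\subseteq T$, so $N_2\subseteq\m_T R^n\cong\m_T T^n\subseteq T^n$ exhibits $N_2$ as a submodule of a finitely generated free $T$-module, i.e.\ as a first syzygy (of $T^n/N_2$) over $T$; the statement for $N_1$ over $S$ is symmetric.
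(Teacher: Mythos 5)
Your proof is correct, and it is essentially the argument of Dress and Kr\"amer that the paper simply cites for this Fact: the second syzygy lies in $\m_RF_1=\m_SF_1\oplus\m_TF_1$ by minimality, and the differential into the free module $F_0$ forces the $\m_S$- and $\m_T$-components of a cycle to be cycles separately, yielding $N'=(N'\cap\m_SR^n)\oplus(N'\cap\m_TR^n)$ with the two summands killed by $\m_T$ and $\m_S$ respectively. You also correctly record the containment $N_2\subseteq\m_TT^n$, which is exactly what makes $N_2$ a (minimal) first syzygy over $T$, the form in which the Fact is used later in the paper.
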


\begin{fact}[\protect{\cite[Theorem~1.8]{moore}}]
\label{fact160316a}
Let $M_1$ be a finitely generated $S$-module.
We describe part of a minimal $R$-free resolution of $M_1$, in terms of the following:
\begin{enumerate}[(1)]
\item \label{lem160316a1}
Let $S^{\beta_2}\xra{d_2} S^{\beta_1}\xra{d_1} S^{\beta_0}$ be the beginning of a minimal $S$-free resolution of $M_1$.
(In particular, $\beta_i$ is the $i$th Betti number $\beta_i^S(M_1)$ of $M_1$ over $S$.)
\item \label{lem160316a2}
Let $S^{b_1}\xra {f_1}S$ be a minimal $S$-free presentation of $k$ where $f_1$ is a $1\times b_1$ matrix
whose entries minimally generate $\m_S$.
(In particular, $b_1=\beta_1^S(k)$.)
\item \label{lem160316a3}
Let $T^{c_2}\xra{g_2}T^{c_1}\xra{g_1}T$ be the beginning of a minimal $T$-free resolution of $k$ where $g_1$ is a $1\times c_1$ matrix
whose entries minimally generate $\m_T$.
(In particular, $c_1=\beta_1^T(k)$.)
\end{enumerate}
Then a minimal $R$-free resolution of $M_1$ begins as follows.
\begin{equation}\label{eq160316a}
R^{\beta_2}\oplus R^{c_1\beta_1}\oplus R^{b_1c_1\beta_0}\oplus R^{c_2\beta_0}
\xra{\left[\begin{smallmatrix}\wti{d_2} & \widehat{g_1}&0&0 \\ 0&0&\widehat{f_1} & \ol{g_2} \end{smallmatrix}\right]}
R^{\beta_1}\oplus R^{c_1\beta_0}
\xra{\left[\wti{d_1} \ \ \ol{g_1}\right]}
R^{\beta_0}
\end{equation}
Here each entry in each matrix is induced by the corresponding map from~\eqref{lem160316a1}--\eqref{lem160316a3} above.
For instance, $\wti{d_2}$ uses the same matrix as $d_2$, only considered over $R$; in particular, these entries are in $\m_S$.
And $\widehat{g_1}$ uses $\beta_1$-many copies of the matrix for $g_1$; in particular, these entries are in $\m_T$.
\end{fact}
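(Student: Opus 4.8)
The plan is to verify directly that the displayed sequence~\eqref{eq160316a} is the beginning of the minimal $R$-free resolution by checking four things: that the augmentation $R^{\beta_0}\to M_1$ has the correct rank, that the two displayed maps are \emph{minimal} (all entries in $\m_R$), that the sequence is a \emph{complex}, and that it is \emph{exact} at the two displayed spots. Everything rests on three structural features of the fiber product: inside $R$ one has $\m_S\m_T=0$ and $\m_S\cap\m_T=0$, while reduction modulo $\m_T$ realizes $S=R/\m_T$ in such a way that $\m_S\subseteq R$ maps isomorphically \emph{and multiplicatively} onto $\m_S\subseteq S$ (and symmetrically for $T=R/\m_S$). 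Write $\Psi=[\,\wti{d_1}\ \ \ol{g_1}\,]$ for the right-hand map and $\Phi$ for the left-hand map of~\eqref{eq160316a}.

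First I would record the augmentation. Since $\m_T M_1=0$, we have $\m_R M_1=\m_S M_1$, so $M_1$ needs exactly $\beta_0=\beta_0^S(M_1)$ generators over $R$; this gives $\epsilon\colon R^{\beta_0}\onto M_1$ factoring as $R^{\beta_0}\onto S^{\beta_0}\to M_1$. Taking preimages under $R^{\beta_0}\onto S^{\beta_0}$ (whose kernel is $\m_T R^{\beta_0}$) and using $\ker(S^{\beta_0}\to M_1)=\im d_1$ yields $\ker\epsilon=\im\wti{d_1}+\m_T R^{\beta_0}$, because $\wti{d_1}$ reduces to $d_1$ modulo $\m_T$. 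As $\ol{g_1}$ consists of $\beta_0$ copies of $g_1$ and the entries of $g_1$ generate $\m_T$, we get $\im\ol{g_1}=\m_T R^{\beta_0}$ and hence $\im\Psi=\ker\epsilon$. Minimality of $\Psi$ and $\Phi$ is immediate: every block has entries lying in $\m_S$ or in $\m_T$, hence in $\m_R$.

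Next I would check $\Psi\Phi=0$ block by block. The four blocks are $\wti{d_1}\wti{d_2}$, $\wti{d_1}\widehat{g_1}$, $\ol{g_1}\widehat{f_1}$, and $\ol{g_1}\ol{g_2}$. The two mixed products vanish because one factor has entries in $\m_S$ and the other in $\m_T$, and $\m_S\m_T=0$. For the remaining two, the multiplicative compatibility noted above gives $\wti{d_1}\wti{d_2}=\wti{d_1 d_2}=0$ and $\ol{g_1}\ol{g_2}=\ol{g_1 g_2}=0$, since $d_1 d_2=0$ over $S$ and $g_1 g_2=0$ over $T$.

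The heart of the argument, and the step I expect to be the main obstacle, is exactness at $R^{\beta_1}\oplus R^{c_1\beta_0}$. The key observation is that for $(u,v)\in\ker\Psi$ one has $\wti{d_1}(u)\in(\m_S)^{\beta_0}$ and $\ol{g_1}(v)\in(\m_T)^{\beta_0}$, and these submodules of $R^{\beta_0}$ meet only in $0$ because $\m_S\cap\m_T=0$; hence $\Psi(u,v)=0$ forces $\wti{d_1}(u)=0$ and $\ol{g_1}(v)=0$ \emph{separately}. Thus $\ker\Psi=\ker\wti{d_1}\times\ker\ol{g_1}$, while $\im\Phi=(\im\wti{d_2}+\im\widehat{g_1})\times(\im\widehat{f_1}+\im\ol{g_2})$ since the two rows of $\Phi$ involve disjoint sets of source coordinates. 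Exactness therefore decouples into the two independent claims
\begin{equation*}
\ker\wti{d_1}=\im\wti{d_2}+\m_T R^{\beta_1},\qquad \ker\ol{g_1}=\m_S R^{c_1\beta_0}+\im\ol{g_2},
\end{equation*}
where I have used $\im\widehat{g_1}=\m_T R^{\beta_1}$ and $\im\widehat{f_1}=\m_S R^{c_1\beta_0}$. Each claim is proved by the same template: reduce modulo the complementary maximal ideal (obtaining $S^{\beta_1}$, resp.\ $T^{c_1\beta_0}$), apply exactness of the original minimal resolution there ($\ker d_1=\im d_2$ over $S$, resp.\ $\ker g_1=\im g_2$ over $T$), lift a preimage to $R$, and absorb the resulting error term into $\m_T R^{\beta_1}$ (resp.\ $\m_S R^{c_1\beta_0}$) using the kernel of the reduction map; the second claim further reduces to a single copy of $g_1$ since $\ol{g_1}$, $\ol{g_2}$, $\widehat{f_1}$ are assembled from $\beta_0$ independent copies. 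Combining gives $\ker\Psi=\im\Phi$, and together with minimality this identifies the displayed portion of~\eqref{eq160316a} as the start of the minimal $R$-free resolution of $M_1$.
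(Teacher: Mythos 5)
Your verification is correct, but it is worth noting that the paper offers no proof of this statement at all: it is recorded as a Fact with a citation to Moore's Theorem~1.8, where the \emph{entire} minimal $R$-free resolution of $M_1$ is constructed by an inductive procedure that splices the $S$-resolution of $M_1$ with the resolutions of $k$ over $S$ and $T$. Your argument instead verifies directly, and only for the displayed initial segment, that one has a minimal complex that is exact in the two relevant spots. The three structural inputs you isolate --- $\m_S\m_T=0$, $\m_S\cap\m_T=0$, and the multiplicative identification of $\m_S\subseteq R$ with $\m_S\subseteq S=R/\m_T$ --- are exactly what is needed, and each step checks out: the count $\beta_0=\dim_k M_1/\m_R M_1$ uses $\m_T M_1=0$; the identity $\Psi\Phi=0$ splits into the two ``pure'' blocks (handled by the multiplicative identification) and the two ``mixed'' blocks (killed by $\m_S\m_T=0$); and the decoupling $\ker\Psi=\ker\wti{d_1}\times\ker\ol{g_1}$ via $\m_S\cap\m_T=0$, followed by reduction modulo the complementary maximal ideal and lifting, is a clean way to reduce exactness over $R$ to exactness of the given resolutions over $S$ and $T$. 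What your route buys is a short, self-contained, elementary proof of precisely the piece of Moore's theorem that the paper actually uses (in Lemma~\ref{lem160316a} and Lemma~\ref{lem160316d}); what it does not give is Moore's full statement, namely the continuation of \eqref{eq160316a} to a complete minimal resolution together with the resulting Betti number recursion --- but none of that is needed here.
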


Our first lemma is akin to~\cite[Lemma~3.2]{nasseh:oeire}, though our proof is vastly different.

\begin{lem}\label{lem160316a}
Let $M_1$, $N_1$ be finitely generated $S$-modules, and let $M_2$, $N_2$ be finitely generated $T$-modules.
Then there are isomorphisms  over the fiber product $R$.
\begin{align*}
\Tor^R_1(M_1,N_1)
&\cong\Tor^S_1(M_1,N_1)\oplus\left(\frac{N_1}{\m_SN_1}\right)^{\beta^T_1(k)\beta^S_0(M_1)}\\
&\cong\Tor^S_1(M_1,N_1)\oplus\left(\frac{M_1}{\m_SM_1}\right)^{\beta^T_1(k)\beta^S_0(N_1)}\\
\Tor^R_1(M_2,N_2)
&\cong\Tor^T_1(M_2,N_2)\oplus\left(\frac{N_2}{\m_TN_2}\right)^{\beta^S_1(k)\beta^T_0(M_2)}\\
&\cong\Tor^T_1(M_2,N_2)\oplus\left(\frac{M_2}{\m_TM_2}\right)^{\beta^S_1(k)\beta^T_0(N_2)}\\
\Tor^R_1(M_1,N_2)
&\cong\Tor^T_1(k,N_2)^{\beta^S_0(M_1)}\oplus\left(\frac{N_2}{\m_TN_2}\right)^{\beta^S_1(M_1)}\\
&\cong\Tor^S_1(M_1,k)^{\beta^T_0(N_2)}\oplus\left(\frac{M_1}{\m_SM_1}\right)^{\beta^T_1(N_2)}\\
\Tor^R_1(M_2,N_1)
&\cong\Tor^T_1(k,M_2)^{\beta^S_0(N_1)}\oplus\left(\frac{M_2}{\m_TM_2}\right)^{\beta^S_1(N_1)}\\
&\cong\Tor^S_1(N_1,k)^{\beta^T_0(M_2)}\oplus\left(\frac{N_1}{\m_SN_1}\right)^{\beta^T_1(M_2)}.
\end{align*}
\end{lem}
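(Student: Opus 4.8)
The plan is to prove all six displayed isomorphisms by a single mechanism: feed the partial minimal $R$-free resolution of Fact~\ref{fact160316a} into $-\otimes_R(-)$ and exploit the fact that the two kinds of matrix entries behave very differently against $S$-modules and $T$-modules. Recall from Fact~\ref{fact160316a} that the maps $\wti{d_i}$ and $\widehat{f_1}$ have entries in $\m_S$, while $\ol{g_i}$ and $\widehat{g_1}$ have entries in $\m_T$. Since a finitely generated $S$-module is precisely an $R$-module killed by $\m_T$ (and dually for $T$), tensoring~\eqref{eq160316a} with an $S$-module annihilates every $\m_T$-entry, while tensoring with a $T$-module annihilates every $\m_S$-entry. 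In each case the boundary maps collapse into a block form and $\Tor^R_1$, being the homology at the middle spot (which depends only on the three displayed terms), splits as a direct sum of the homologies of the surviving blocks. Write $\partial_1,\partial_2$ for the two displayed maps of~\eqref{eq160316a}.

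First I would treat $\Tor^R_1(M_1,N_1)$, with both arguments over $S$; this is the model computation. Tensoring with $N_1$ kills $\widehat{g_1},\ol{g_1},\ol{g_2}$, so $\partial_1$ becomes $[\,d_1\otimes N_1\ \ 0\,]$ and $\partial_2$ becomes the block-diagonal map with diagonal blocks $d_2\otimes N_1$ and $\widehat{f_1}\otimes N_1$ on the summands $N_1^{\beta_1}$ and $N_1^{c_1\beta_0}$. Hence the homology splits as
\[
\frac{\ker(d_1\otimes N_1)}{\im(d_2\otimes N_1)}\ \oplus\ \frac{N_1^{c_1\beta_0}}{\im(\widehat{f_1}\otimes N_1)}.
\]
The left summand is exactly $\Tor^S_1(M_1,N_1)$, read off the minimal $S$-resolution $S^{\beta_2}\to S^{\beta_1}\to S^{\beta_0}$. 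For the right summand, $\widehat{f_1}$ is $c_1\beta_0=\beta^T_1(k)\beta^S_0(M_1)$ copies of the presentation $f_1$ of $k$, whose entries generate $\m_S$; each copy contributes the cokernel $N_1/\m_SN_1$, giving $(N_1/\m_SN_1)^{\beta^T_1(k)\beta^S_0(M_1)}$. This is the first isomorphism, and the second is obtained by running the same computation on the resolution of $N_1$ tensored with $M_1$, via $\Tor^R_1(M_1,N_1)\cong\Tor^R_1(N_1,M_1)$.

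Next I would treat the mixed case $\Tor^R_1(M_1,N_2)$, where the opposite set of maps survives: tensoring with the $T$-module $N_2$ kills $\wti{d_1},\wti{d_2},\widehat{f_1}$, so $\partial_1$ becomes $[\,0\ \ \ol{g_1}\otimes N_2\,]$ and $\partial_2$ becomes block-diagonal with blocks $\widehat{g_1}\otimes N_2$ and $\ol{g_2}\otimes N_2$. The homology splits as $\bigl(N_2^{\beta_1}/\im(\widehat{g_1}\otimes N_2)\bigr)\oplus\bigl(\ker(\ol{g_1}\otimes N_2)/\im(\ol{g_2}\otimes N_2)\bigr)$. Here $\widehat{g_1}$ is $\beta_1=\beta^S_1(M_1)$ copies of $g_1$, with entries generating $\m_T$, giving $(N_2/\m_TN_2)^{\beta^S_1(M_1)}$; and $\ol{g_1},\ol{g_2}$ are $\beta_0=\beta^S_0(M_1)$ copies of the $T$-resolution $T^{c_2}\to T^{c_1}\to T$ of $k$, so their block computes $\Tor^T_1(k,N_2)^{\beta^S_0(M_1)}$. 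This gives the first expression for $\Tor^R_1(M_1,N_2)$; the second follows from $\Tor^R_1(M_1,N_2)\cong\Tor^R_1(N_2,M_1)$ together with the $S\leftrightarrow T$ symmetric form of Fact~\ref{fact160316a} applied to $N_2$. The two remaining lines, for $\Tor^R_1(M_2,N_2)$ and $\Tor^R_1(M_2,N_1)$, are obtained verbatim by interchanging $S$ and $T$ throughout, with the two presentations of each coming from swapping the tensor arguments.

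The linear algebra is routine; the one point demanding care, and the main obstacle, is the bookkeeping of the block structure. One must verify that after tensoring, the images of the surviving columns of $\partial_2$ land in \emph{distinct} summands of the middle module, so that the quotient genuinely splits as a direct sum, and that the exponents $b_1,c_1,c_2,\beta_i$ are tracked correctly as numbers of copies of $f_1$, $g_1$, $g_2$, and $d_i$. Once the block-diagonal shape is confirmed, identifying each block's homology as the appropriate $\Tor^S_1$, $\Tor^T_1$, $\Tor^T_1(k,-)$, or residue-type cokernel is immediate.
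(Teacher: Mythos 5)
Your proposal is correct and follows essentially the same route as the paper: tensor Moore's partial minimal $R$-free resolution (Fact~\ref{fact160316a}) against the module, use the $\m_S$/$\m_T$ dichotomy of the matrix entries to kill half the blocks, and read off the two summands of the homology as a $\Tor$ over $S$ or $T$ plus a residue-type cokernel. The paper only writes out the first isomorphism and declares the rest similar, whereas you also carry out the mixed case explicitly, with the block bookkeeping done correctly.
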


\begin{proof}
We verify the first isomorphism. The others are obtained similarly.
Compute $\Tor^R_1(M_1,N_1)$ by tensoring the sequence~\eqref{eq160316a} with $N_1$.
\begin{equation}\label{eq160316b}
N_1^{\beta_2}\oplus N_1^{c_1\beta_1}\oplus N_1^{b_1c_1\beta_0}\oplus N_1^{c_2\beta_0}
\xra{\left[\begin{smallmatrix}\wti{d_2}' & \widehat{g_1}'&0&0 \\ 0&0&\widehat{f_1}' & \ol{g_2}' \end{smallmatrix}\right]}
N_1^{\beta_1}\oplus N_1^{c_1\beta_0}
\xra{\left[\wti{d_1}' \ \ \ol{g_1}'\right]}
N_1^{\beta_0}
\end{equation}
Since $N_1$ is an $S$-module, it is annihilated by $\m_T$,
so the fact that the entries of $\widehat{g_1}$ are in $\m_T$ implies that $\widehat{g_1}'=0$,
and similarly for $\overline{g_1}'$ and $\overline{g_2}'$.
Thus, the complex~\eqref{eq160316b} has the following form.
\begin{equation*}
N_1^{\beta_2}\oplus N_1^{c_1\beta_1}\oplus N_1^{b_1c_1\beta_0}\oplus N_1^{c_2\beta_0}
\xra{\left[\begin{smallmatrix}\wti{d_2}' & 0&0&0 \\ 0&0&\widehat{f_1}' & 0 \end{smallmatrix}\right]}
N_1^{\beta_1}\oplus N_1^{c_1\beta_0}
\xra{\left[\wti{d_1}' \ \ 0\right]}
N_1^{\beta_0}
\end{equation*}
From this, it follows that we have
\begin{align*}
\Tor^R_1(M_1,N_1)
&\cong \HH\left(N_1^{\beta_2}\xra{\wti{d_2}'}N_1^{\beta_1}\xra{\wti{d_1}'}N_1^{\beta_0}\right)\oplus
	\HH\left(N_1^{b_1c_1\beta_0}\xra{\widehat{f_1}'}N_1^{c_1\beta_0}\xra{0}N_1^{\beta_0}\right) \\
&\cong\Tor^S_1(M_1,N_1)\oplus\left(\frac{N_1}{\m_SN_1}\right)^{c_1\beta_0}
\end{align*}
as desired.
\end{proof}

The next two lemmas are essentially applications of the previous one, for use in our first main theorem.


\begin{lem}\label{lem160316b}
Let $M_1$, $N_1$ be  finitely generated $S$-modules.
If $\Tor^R_1(M_1,N_1)=0$, then $M_1=0$ or $N_1=0$.
\end{lem}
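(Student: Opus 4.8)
The plan is to read off the result directly from the first isomorphism in Lemma~\ref{lem160316a}, which computes $\Tor^R_1(M_1,N_1)$ as a direct sum of $\Tor^S_1(M_1,N_1)$ and a power of $N_1/\m_S N_1$. Since $M_1$ and $N_1$ are $S$-modules (equivalently, $R$-modules annihilated by $\m_T$), that lemma applies verbatim. The hypothesis $\Tor^R_1(M_1,N_1)=0$ then forces each summand on the right-hand side to vanish; I only need the vanishing of the second summand $\left(N_1/\m_S N_1\right)^{\beta^T_1(k)\beta^S_0(M_1)}$.

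The crux is to observe that the exponent $\beta^T_1(k)\beta^S_0(M_1)$ is strictly positive whenever $M_1\neq 0$. Here $\beta^T_1(k)$ is the minimal number of generators of $\m_T$, which is positive because the standing assumption $T\neq k$ guarantees $\m_T\neq 0$; and $\beta^S_0(M_1)$ is the minimal number of generators of $M_1$ over the local ring $S$, which is positive exactly when $M_1\neq 0$.

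So I would split into cases. If $M_1=0$ there is nothing to prove. If $M_1\neq 0$, the exponent is at least $1$, so the vanishing of the second summand yields $N_1/\m_S N_1=0$, that is, $N_1=\m_S N_1$. Because $N_1$ is finitely generated over the local ring $S$, Nakayama's lemma gives $N_1=0$, which is exactly the desired conclusion.

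I do not anticipate a real obstacle: the only substantive input beyond Lemma~\ref{lem160316a} is the positivity of $\beta^T_1(k)$, which is precisely where non-triviality of the fiber product enters, followed by a routine appeal to Nakayama. The symmetric (second) isomorphism in Lemma~\ref{lem160316a} would let one interchange the roles of $M_1$ and $N_1$ if desired, but one application is enough.
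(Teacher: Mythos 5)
Your proposal is correct and is essentially identical to the paper's own proof: both invoke the first isomorphism of Lemma~\ref{lem160316a}, note that $\beta^T_1(k)\neq 0$ (from $T\neq k$) and $\beta^S_0(M_1)\neq 0$ when $M_1\neq 0$, and then apply Nakayama's Lemma to the vanishing of $N_1/\m_SN_1$ to conclude $N_1=0$.
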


\begin{proof}
Assume that $M_1\neq 0$. Hence, $\beta^S_0(M_1)\neq 0$. 
Also, the assumption $T\neq k$ implies that $\beta^T_1(k)\neq 0$.
From Lemma~\ref{lem160316a}, we have
$$0=
\Tor^R_1(M_1,N_1)
\cong\Tor^S_1(M_1,N_1)\oplus\left(\frac{N_1}{\m_SN_1}\right)^{\beta^T_1(k)\beta^S_0(M_1)}
$$
so $N_1=\m_SN_1$. Now by Nakayama's Lemma we conclude that $N_1=0$.
\end{proof}

\begin{lem}\label{lem160316c}
Let $M_1$, $N_1$ be first syzygies over $S$ of finitely generated $S$-modules, and let $M_2$, $N_2$ be
first syzygies over $T$ of finitely generated $T$-modules.
Assume that $\depth(S)=0$ or $\depth(T)=0$,
and set $M:=M_1\oplus M_2$ and $N:=N_1\oplus N_2$.
Assume that $\Tor^R_1(M,N)=0$; then $M=0$ or $N=0$.
\end{lem}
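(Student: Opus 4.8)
The plan is to split $\Tor^R_1(M,N)$ into the four ``pure'' and ``mixed'' pieces coming from the direct-sum decompositions of $M$ and $N$, dispose of the pure pieces with the preceding lemmas, and then use the depth hypothesis to kill the mixed pieces.

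First I would invoke additivity of $\Tor$ in each variable to obtain
\[
\Tor^R_1(M,N)\cong\Tor^R_1(M_1,N_1)\oplus\Tor^R_1(M_1,N_2)\oplus\Tor^R_1(M_2,N_1)\oplus\Tor^R_1(M_2,N_2),
\]
so the hypothesis $\Tor^R_1(M,N)=0$ forces all four summands to vanish. For the pure pieces, Lemma~\ref{lem160316b} applied to $\Tor^R_1(M_1,N_1)=0$ gives $M_1=0$ or $N_1=0$, and its evident $T$-analogue---proved identically from the isomorphism $\Tor^R_1(M_2,N_2)\cong\Tor^T_1(M_2,N_2)\oplus(N_2/\m_TN_2)^{\beta^S_1(k)\beta^T_0(M_2)}$ of Lemma~\ref{lem160316a} together with $\beta^S_1(k)\neq0$ (as $S\neq k$)---applied to $\Tor^R_1(M_2,N_2)=0$ gives $M_2=0$ or $N_2=0$. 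Distributing these two disjunctions produces four cases, and in two of them ($M_1=M_2=0$ or $N_1=N_2=0$) we already conclude $M=0$ or $N=0$. The remaining two ``mixed'' cases are $M_1=N_2=0$ and $N_1=M_2=0$.

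Since the hypotheses and conclusion are symmetric under interchanging $(S,M_1,N_1)$ with $(T,M_2,N_2)$, I would assume $\depth(T)=0$. In the case $M_1=N_2=0$ we have $M=M_2$, $N=N_1$, and the surviving vanishing is $\Tor^R_1(M_2,N_1)=0$; by Lemma~\ref{lem160316a},
\[
0=\Tor^R_1(M_2,N_1)\cong\Tor^T_1(k,M_2)^{\beta^S_0(N_1)}\oplus\left(\frac{M_2}{\m_TM_2}\right)^{\beta^S_1(N_1)}.
\]
If $N_1\neq0$, then $\beta^S_0(N_1)\neq0$, forcing $\Tor^T_1(k,M_2)=0$, so $M_2$ is $T$-free; hence it suffices to show that a $T$-free first syzygy vanishes. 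The case $N_1=M_2=0$ is symmetric, using $\Tor^R_1(M_1,N_2)\cong\Tor^T_1(k,N_2)^{\beta^S_0(M_1)}\oplus(N_2/\m_TN_2)^{\beta^S_1(M_1)}$ to see that $N_2$ is $T$-free once $M_1\neq0$.

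The crux---and the only place the depth hypothesis is used---is the fact that over a local ring $T$ with $\depth(T)=0$, a first syzygy that is free must be zero. Here I would use that a first syzygy $W$, being a minimal syzygy (cf.\ the embedding $N_2\subseteq\m_TR^n\cong\m_TT^n$ in Fact~\ref{fact160316b}), embeds in $\m_TT^b$, so if $W\cong T^r$ the inclusion $T^r\hookrightarrow T^b$ is given by a matrix with all entries in $\m_T$. Choosing a nonzero socle element $x\in\soc(T)$---which exists exactly because $\depth(T)=0$---the element $(x,0,\dots,0)\in T^r$ is nonzero yet maps into $x\m_TT^b=0$, contradicting injectivity unless $r=0$. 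Thus $W=0$, so $M_2=0$ (resp.\ $N_2=0$) and therefore $M=0$ or $N=0$ in each mixed case. I expect this step to be the main obstacle, both because it requires pinning down that ``first syzygy'' is used in the minimal sense and because it is genuinely false without $\depth(T)=0$: otherwise nonzero free modules arise as syzygies with $\Tor^R_1$ vanishing, e.g.\ $\Tor^R_1(T,S)\cong(\m_S\cap\m_T)/(\m_S\m_T)=0$ with $T$ and $S$ both free and nonzero. Alternatively one can replace the socle argument by Auslander--Buchsbaum: $W=\Omega^1_TL$ free gives $\pd_TL\leq1$, whence $\pd_TL=0$ and $W=0$ when $\depth(T)=0$.
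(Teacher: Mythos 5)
Your proof is correct and follows essentially the same route as the paper's: decompose $\Tor^R_1(M,N)$ into its four summands, dispose of the pure ones via Lemma~\ref{lem160316b} (and its evident $T$-analogue), use Lemma~\ref{lem160316a} on the surviving mixed summand to force freeness, and rule out a nonzero free minimal first syzygy with the socle when the relevant depth is zero. The only organizational difference is that you spend the $S\leftrightarrow T$ symmetry on normalizing $\depth(T)=0$ and hence need only one of the two isomorphisms for the mixed Tor, whereas the paper normalizes $M_1\neq 0$ and extracts freeness over both $S$ and $T$ from the two isomorphisms of $\Tor^R_1(M_1,N_2)$, so that whichever ring has depth zero supplies the contradiction.
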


\begin{proof}
Assume that $M_1\oplus M_2=M\neq 0$. We need to show that $N=0$.
By symmetry, assume further that $M_1\neq 0$.
The assumption $\Tor^R_1(M,N)=0$
implies that $\Tor^R_1(M_1,N_1)=0$, so Lemma~\ref{lem160316b} implies that $N_1=0$.

Suppose by way of contradiction that $N_2\neq 0$.
Another application of the Tor-vanishing assumption, using Lemma~\ref{lem160316a}, implies that
\begin{align*}
0
&=\Tor^R_1(M_1,N_2)\\
&\cong\Tor^T_1(k,N_2)^{\beta^S_0(M_1)}\oplus\left(\frac{N_2}{\m_TN_2}\right)^{\beta^S_1(M_1)}\\
&\cong\Tor^S_1(M_1,k)^{\beta^T_0(N_2)}\oplus\left(\frac{M_1}{\m_SM_1}\right)^{\beta^T_1(N_2)}.
\end{align*}
Since $\beta^S_0(M_1)\neq 0\neq \beta^T_0(N_2)$, we conclude that $\Tor^T_1(k,N_2)=0=\Tor^S_1(M_1,k)$.
Hence,  $M_1\neq 0$ is free over $S$ and $N_2\neq 0$ is free over $T$,
so $S$ is a summand of $M_1$ and $T$ is a summand of $N_2$.
Since $M_1$ is a syzygy over $S$, we have $S\subseteq M_1\subseteq\m_S S^m$ for some $m\geq 1$.
If $\depth(S)=0$, this is impossible. Indeed, this implies that there is an element $t\in \m_S S^m$ such that the map
$S\to \m_S S^m$ given by $s\mapsto st$ is a monomorphism; but the socle $\soc(S)\neq 0$ is contained in the kernel of this map.

On the other hand, the fact that $N_2$ is a syzygy over $T$ yields a contradiction in the case $\depth(T)=0$.
\end{proof}

\begin{disc}\label{disc160317a}
The
following computation
$$\Tor^R_1(S,T)=\Tor^R_1(R/\m_T,R/\m_S)\cong(\m_S\cap\m_T)/(\m_S\m_T)=0$$
shows  the necessity of the syzygy assumptions in the previous result,
since $S,T\neq 0$.
The isomorphism here is standard.
\end{disc}

The following result contains Theorem~\ref{thm160319a}\eqref{thm160319a1} from the introduction.

\begin{thm}\label{thm160316a}
Let $M$, $N$ be finitely generated modules  over the fiber product $R$.
If  $\depth(S)=0$ or $\depth(T)=0$,
then the following conditions are equivalent.
\begin{enumerate}[\rm(i)]
\item \label{thm160316a1}
$\Tor^R_i(M,N)=0$ for $i\gg 0$
\item \label{thm160316a2}
$\Tor^R_i(M,N)=0$ for all $i\geq 1$
\item \label{thm160316a3}
$\Tor^R_i(M,N)=0$ for some $i\geq 5$
\item \label{thm160316a4}
$\pd_R(M)<\infty$ or $\pd_R(N)<\infty$
\item \label{thm160316a5}
$M$ or $N$ is  $R$-free
\end{enumerate}
\end{thm}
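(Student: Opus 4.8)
The plan is to establish the cycle of implications (ii) $\Rightarrow$ (i) $\Rightarrow$ (iii) $\Rightarrow$ (v) $\Rightarrow$ (iv) $\Rightarrow$ (ii), in which every link except (iii) $\Rightarrow$ (v) is essentially formal. The implications (ii) $\Rightarrow$ (i) $\Rightarrow$ (iii) are immediate, since each index range contains the next, and (v) $\Rightarrow$ (iv) is clear because a free module has projective dimension zero. The device that handles the remaining formal links is the depth hypothesis: by~\eqref{eq160315a}, the assumption $\depth(S)=0$ or $\depth(T)=0$ gives $\depth(R)=0$, so the Auslander--Buchsbaum formula forces every finitely generated $R$-module of finite projective dimension to be free. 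In particular, for (iv) $\Rightarrow$ (ii) one of $M,N$ is then free, and tensoring the other with its free resolution kills all Tor in positive degrees.

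The content of the theorem lies in (iii) $\Rightarrow$ (v). Assume $\Tor^R_i(M,N)=0$ for some $i\geq 5$, and let $\Omega^j(-)$ denote the $j$th syzygy over $R$. Standard dimension shifting in each variable --- peeling off free covers, which is legitimate since each shift is applied at a Tor-index at least $2$ --- yields
$$\Tor^R_i(M,N)\cong\Tor^R_1\bigl(\Omega^2 M,\,\Omega^{i-3}N\bigr).$$
I would split the shift deliberately as $2+(i-3)$: because $i\geq 5$ we have $i-3\geq 2$, so both $\Omega^2 M$ and $\Omega^{i-3}N$ are \emph{second} syzygies over $R$. Hence Fact~\ref{fact160316b} applies to each, giving decompositions $\Omega^2 M\cong X_1\oplus X_2$ and $\Omega^{i-3}N\cong Y_1\oplus Y_2$ in which $X_1,Y_1$ are first syzygies over $S$ and $X_2,Y_2$ are first syzygies over $T$.

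This is exactly the hypothesis of Lemma~\ref{lem160316c}. Applying it to the vanishing $\Tor^R_1(\Omega^2 M,\Omega^{i-3}N)=0$ forces $\Omega^2 M=0$ or $\Omega^{i-3}N=0$, that is, $\pd_R(M)\leq 1$ or $\pd_R(N)\leq i-4$. Either way one of $M,N$ has finite projective dimension, and the Auslander--Buchsbaum observation above upgrades this to $R$-freeness, giving (v) and closing the cycle.

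I expect the only real obstacle to be arranging the single vanishing $\Tor^R_i$ so that \emph{both} arguments of the resulting first Tor are second syzygies; this is precisely what the bound $i\geq 5$ provides, since it leaves room to write $i-1=a+b$ with $a,b\geq 2$ before invoking Fact~\ref{fact160316b} and Lemma~\ref{lem160316c} simultaneously. The remaining care is routine: one should note that the syzygy summands $X_j,Y_j$ may be zero, which is harmless because the zero module is a syzygy of a free module, so Lemma~\ref{lem160316c} still applies verbatim.
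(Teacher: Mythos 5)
Your proposal is correct and follows essentially the same route as the paper: the same splitting of the dimension shift as $2+(i-3)$ into second syzygies, the same appeal to Fact~\ref{fact160316b} and Lemma~\ref{lem160316c}, and the same use of $\depth(R)=0$ with Auslander--Buchsbaum to upgrade finite projective dimension to freeness. The only (immaterial) difference is that you organize the equivalences as a cycle, whereas the paper proves $(n)\Rightarrow(\mathrm{v})$ for each $n$ and notes the converses are routine.
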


\begin{proof}
Assume for this paragraph that there exists
an integer $i\geq 5$ such that $\Tor^R_i(M,N)=0$; we prove that $M$ or $N$ is  $R$-free.
Let $M'$ be the second syzygy of $M$, and let $N'$ be the $(i-3)$rd syzygy of $N$.
Dimension-shift to conclude that $\Tor^R_{1}(M',N')\cong\Tor^R_{3}(M,N')\cong\Tor^R_{i}(M,N)=0$.
The assumption $i\geq 5$ implies $i-3\geq 2$.
Thus, Fact~\ref{fact160316b} implies that $M'=M'_1\oplus M'_2$ and $N'=N'_1\oplus N'_2$ where
$M'_1$, $N'_1$ are (finitely generated) first syzygies over $S$, and
$M'_2$, $N'_2$ are (finitely generated) first syzygies over $T$.
Thus, Lemma~\ref{lem160316c} implies that $M'=0$ or $N'=0$. In light of the construction of $M'$ and $N'$, it follows that
$\pd_R(M)\leq 1<\infty$ or $\pd_R(N)\leq i-4<\infty$.
By equation~\eqref{eq160315a} from the introduction, we have $\depth(R)=0$.
Thus, by the Auslander-Buchsbaum formula,
we have
$\pd_R(M)=0$ or $\pd_R(N)=0$.

Using the previous paragraph, for $n= \text{i}, \ldots, \text{iv}$ we have $(n)\implies\eqref{thm160316a5}$. The converses of these implications are routine.
\end{proof}

Theorem~\ref{thm160316a} gives the following slightly weaker version of
a result of Nasseh and Yoshino~\cite[Theorem 3.1]{nasseh:oeire}; see also~\cite[Proposition 5.2]{avramov:htecdga}.
This result uses the ``trivial extension'' $S\ltimes k$; as an additive abelian group, this is $S\oplus k$, and multiplication
is given by the formula $(s,x)(t,y):=(st,sy+tx)$.
One shows readily that there is an isomorphism $S\ltimes k\cong S\times_k(k[x]/(x^2))$.
Our result is slightly weaker than Nasseh and Yoshino's result because that result only requires $i\geq 3$;
note that they also give an example showing that this range of $i$-values is optimal in their setting.

\begin{cor}\label{cor080915a}
Assume that $M$ and $N$ are non-zero finitely generated $S\ltimes k$-modules such that $\Tor_{i}^{S\ltimes k} (M,N) = 0$ for some $i\geq 5$;
then $M$ or $N$ is free over $S\ltimes k$.
\end{cor}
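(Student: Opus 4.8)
The plan is to identify $S\ltimes k$ with a non-trivial fiber product and then quote Theorem~\ref{thm160316a} directly. As recorded just before the statement, there is a ring isomorphism $S\ltimes k\cong S\times_k(k[x]/(x^2))$. So I would set $T:=k[x]/(x^2)$ and $R:=S\times_kT$, noting that $R$ is a genuine (non-trivial) fiber product: our standing hypothesis gives $S\neq k$, and $T\neq k$ since $x\neq 0$ in $T$. Under the isomorphism $R\cong S\ltimes k$, the modules $M$ and $N$ become finitely generated $R$-modules, and the hypothesis $\Tor^{S\ltimes k}_i(M,N)=0$ becomes $\Tor^R_i(M,N)=0$ for the same $i\geq 5$.

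Next I would verify the depth hypothesis needed to invoke Theorem~\ref{thm160316a}. The ring $T=k[x]/(x^2)$ is Artinian local with maximal ideal $\fm_T=(x)$, and its socle contains $x\neq 0$; equivalently, $\fm_T$ consists of zerodivisors. Hence $\depth(T)=0$, so the standing alternative ``$\depth(S)=0$ or $\depth(T)=0$'' of Theorem~\ref{thm160316a} is satisfied.

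With these two observations in hand, the conclusion is immediate: the hypothesis $\Tor^R_i(M,N)=0$ for some $i\geq 5$ is exactly condition~\eqref{thm160316a3} of Theorem~\ref{thm160316a}, which is equivalent to condition~\eqref{thm160316a5}, namely that $M$ or $N$ is $R$-free. Transporting back along $R\cong S\ltimes k$ and using that $M$ and $N$ are nonzero yields the claim. I do not expect a real obstacle here: all of the work is carried by Theorem~\ref{thm160316a}, and the only things to check are the fiber-product presentation of $S\ltimes k$ and the computation $\depth(T)=0$, both routine. The one point deserving a moment's care is confirming that the presentation $S\times_k(k[x]/(x^2))$ is non-trivial, which is exactly where the standing assumption $S\neq k$ enters.
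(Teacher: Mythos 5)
Your proof is correct and follows exactly the route the paper intends: the corollary is stated immediately after the observation that $S\ltimes k\cong S\times_k(k[x]/(x^2))$ precisely so that Theorem~\ref{thm160316a} applies with $T=k[x]/(x^2)$, which has depth $0$. Your verification of the non-triviality of the fiber product and of $\depth(T)=0$ matches the (implicit) argument in the paper.
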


Here is an example showing that in the general setting of Theorem~\ref{thm160316a},
vanishing of $\Tor^R_3(M,N)$ is not enough to guarantee
$\pd_R(M)<\infty$ or $\pd_R(N)<\infty$, in contrast to the special case of~\cite[Theorem 3.1]{nasseh:oeire}.
Neither are two sequential vanishings $\Tor^R_2(M,N)=0=\Tor^R_{3}(M,N)$ enough.
(See also Theorem~\ref{thm160316b} below.)
In the example, we use the following straightforward fact: If $0\neq s\in\m_S$ and $0\neq t\in\m_T$, then
$\ann_R(s+t)=\ann_S(s)\oplus\ann_T(t)$.

\begin{ex}\label{ex160317a}
Consider the local artinian rings $S:=k[U,V]/(U^2,V^2)$ and $T:=k[X,Y]/(X^2,Y^2)$, and set $R:=S\times_kT$ as usual.
Use lower-case letters $u,v,x,y$ to represent the residues of the variables $U,V,X,Y$ in $R$ and in the respective rings $S$ and $T$.
With $M=R/(u+x)R$ and $N=R/(v+y)R$, we claim that $\Tor^R_i(M,N)=0$ if and only if $i=2$ or $3$.

Note that $M$ and $N$ are not free over $R$, since they have non-trivial annihilators.
Since $\depth(R)=0$, the Auslander-Buchsbaum formula implies that $M$ and $N$ have infinite projective dimension.
By Theorem~\ref{thm160316a}, it follows that $\Tor^R_i(M,N)\neq 0$ for all $i\geq 5$.
In addition, we have $\Tor^R_0(M,N)\cong \Otimes MN\cong R/(u+x,y+v)R\neq 0$.
Thus, it remains to show that $\Tor^R_1(M,N)\neq 0=\Tor^R_2(M,N)=\Tor^R_3(M,N)\neq \Tor^R_4(M,N)$.
To this end, we note the following straightforward computation.
\begin{equation}
(uS\oplus xT)\cap(vS\oplus yT)=uvS\oplus xyT=(uS\oplus xT)\cdot(vS\oplus yT) \label{eq160317b}
\end{equation}
Next, we explain the first equality in the following display
\begin{equation}
(u+x)R\cap(v+y)R=uvS\oplus xyT\neq(uv+xy)R=(u+x)R\cdot(v+y)R.\label{eq160317a}
\end{equation}
Here, the containment $(u+x)R\cap(v+y)R\subseteq uv S\oplus xyT$
follows from~\eqref{eq160317b} since we have $(u+x)R\subseteq (uS\oplus xT)$
and $(v+y)R\subseteq(vS\oplus yT)$. The reverse containment follows from the equalities
$v(u+x)=uv=u(v+y)$ and $x(v+y)=xy=y(u+x)$.
(The other steps in~\eqref{eq160317a} are even more straightforward.)
Similarly, we have
\begin{equation}
(uS\oplus xT)\cap(v+y)R=(uS\oplus xT)\cdot(v+y)R.\label{eq160317c}
\end{equation}
Now we show how this helps us to compute $\Tor^R_i(M,N)$.
We make repeated use of the formula $\Tor^R_1(R/I,R/J)\cong(I\cap J)/IJ$, first for $i=1$:
$$\Tor^R_1(M,N)=\Tor^R_1(R/(u+x)R,R/(v+y)R)\cong\frac{(u+x)R\cap(v+y)R}{(u+x)R\cdot(v+y)R}\neq 0$$
where the non-vanishing is from~\eqref{eq160317a}.
Similarly, as the paragraph preceding this example shows
$\ann_R(u+x)=uS\oplus xT$,
the next computation follows by dimension-shifting and using~\eqref{eq160317c}.
$$\Tor^R_2(M,N)=\Tor^R_1(R/(uS\oplus xT),R/(v+y)R)\cong\frac{(uS\oplus xT)\cap(v+y)R}{(uS\oplus xT)\cdot(v+y)R}= 0$$
Similarly, from~\eqref{eq160317b} we have $\Tor^R_3(M,N)=0$.
Thus, it remains to show that $\Tor^R_4(M,N)\neq 0$.
To this end, again by dimension-shifting we have
\begin{align*}
\Tor^R_4(M,N)
&\cong\Tor^R_2(R/(uS\oplus xT),R/(vS\oplus yT))\\
&\cong\Tor^R_1(uS\oplus xT,R/(vS\oplus yT))\\
&\cong\Tor^R_1(uS,R/(vS\oplus yT))
\oplus\Tor^R_1(xT,R/(vS\oplus yT)).
\intertext{So, it suffices to show that $\Tor^R_1(uS,R/(vS\oplus yT))\neq 0$, which we show next.}
\Tor^R_1(uS,R/(vS\oplus yT))
&\cong\Tor^R_1(R/\ann_R(u),R/(vS\oplus yT))\\
&\cong\Tor^R_1(R/(uS\oplus\m_T),R/(vS\oplus yT))\\
&\cong\frac{(uS\oplus\m_T)\cap(vS\oplus yT)}{(uS\oplus\m_T)\cdot(vS\oplus yT)}\\
&=\frac{uvS\oplus yT}{uvS\oplus y\m_T}\\
&\neq 0
\end{align*}
This completes the example.
\end{ex}

\begin{disc}\label{disc160317b}
As in the preceding example,
one can show that the ring $R=k[\![U,V]\!]/(UV)\times_kk[\![X,Y]\!]/(XY)$
with the modules $M=R/(u+x)R$ and $N=R/(v+y)R$ satisfy
$\Tor^R_i(M,N)=0$ if and only if $i=1$ or 3.
(Note that this ring fits in the context  of Theorem~\ref{thm160316b} below.)
\end{disc}

Next, we work on the case where $S$ and $T$ both have positive depth.

\begin{lem}\label{lem160316d}
Let $M_1$, $N_1$ be first syzygies over $S$ of finitely generated $S$-modules, and let $M_2$, $N_2$ be
first syzygies over $T$ of finitely generated $T$-modules.
Set $M:=M_1\oplus M_2$ and $N:=N_1\oplus N_2$.
Assume that $\Tor^R_1(M,N)=0=\Tor^R_2(M,N)$; then $M=0$ or $N=0$.
\end{lem}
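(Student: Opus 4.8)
The plan is to follow the proof of Lemma~\ref{lem160316c} up to the point where free summands are produced, and then to replace the depth argument used there with the extra hypothesis $\Tor^R_2(M,N)=0$. Assume $M\neq 0$; I must show $N=0$. By the symmetry interchanging the roles of $S$ and $T$, I may assume $M_1\neq 0$. Since $\Tor^R_1(M,N)=0$ forces $\Tor^R_1(M_1,N_1)=0$, Lemma~\ref{lem160316b} gives $N_1=0$, so it remains to prove $N_2=0$. Suppose toward a contradiction that $N_2\neq 0$.

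Next I would reproduce the freeness step of Lemma~\ref{lem160316c} verbatim. Applying $\Tor^R_1(M_1,N_2)=0$ together with the two expressions for $\Tor^R_1(M_1,N_2)$ in Lemma~\ref{lem160316a}, and using $\beta^S_0(M_1)\neq 0\neq\beta^T_0(N_2)$, one obtains $\Tor^T_1(k,N_2)=0=\Tor^S_1(M_1,k)$. Hence $M_1$ is a nonzero finitely generated free $S$-module and $N_2$ is a nonzero finitely generated free $T$-module; write $M_1\cong S^a$ and $N_2\cong T^b$ with $a,b\geq 1$.

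The new ingredient, and the step I expect to be the crux, is to extract a contradiction from $\Tor^R_2(M,N)=0$. Since $M_1$ is a direct summand of $M$ and $N_2$ a direct summand of $N$, the module $\Tor^R_2(M_1,N_2)\cong\Tor^R_2(S,T)^{ab}$ is a direct summand of $\Tor^R_2(M,N)=0$, so $\Tor^R_2(S,T)=0$. I would then compute $\Tor^R_2(S,T)$ directly and show it is nonzero. Using $S\cong R/\m_T$ and the short exact sequence $0\to\m_T\to R\to S\to 0$, a dimension shift gives $\Tor^R_2(S,T)\cong\Tor^R_1(\m_T,T)$. Now $\m_T$ and $T\cong R/\m_S$ are both annihilated by $\m_S$ (as $\m_S\cap\m_T=0$), hence are finitely generated $T$-modules, so the relevant case of Lemma~\ref{lem160316a} applies and yields
$$\Tor^R_1(\m_T,T)\cong\Tor^T_1(\m_T,T)\oplus\left(\frac{T}{\m_T T}\right)^{\beta^S_1(k)\beta^T_0(\m_T)}\cong k^{\beta^S_1(k)\beta^T_0(\m_T)}.$$
Since $S\neq k$ gives $\beta^S_1(k)\geq 1$ and $T\neq k$ gives $\beta^T_0(\m_T)\geq 1$, this is nonzero, contradicting $\Tor^R_2(S,T)=0$. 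Therefore $N_2=0$ and $N=N_1\oplus N_2=0$, as required. I note that the depth-zero hypothesis of Lemma~\ref{lem160316c} was exactly what prevented a first syzygy from having a free summand, and that tool is unavailable here; the second vanishing $\Tor^R_2(M,N)=0$ rules out the free summands instead, via $\Tor^R_2(S,T)\neq 0$, which is why no depth assumption is needed in this general version and why carrying out that single computation cleanly is the main obstacle.
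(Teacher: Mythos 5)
Your proof is correct, but the crux is handled by a genuinely different argument than the paper's. Both proofs open identically: assume $M_1\neq 0$, deduce $N_1=0$ from Lemma~\ref{lem160316b}, and suppose $N_2\neq 0$ for contradiction. From there the paper never extracts free summands; instead it notes $M_2=0$ (again by Lemma~\ref{lem160316b}), takes the minimal $R$-free presentation of $M=M_1$ from Fact~\ref{fact160316a}, and shows that the resulting $R$-syzygy $M'$ has a nonzero $T$-module component $M'_2\supseteq\m_TR^{\beta_0}$; then $0=\Tor^R_2(M,N)\cong\Tor^R_1(M',N)$ and one more application of Lemma~\ref{lem160316b} force $N_2=0$. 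You instead reuse the freeness step of Lemma~\ref{lem160316c} to get $M_1\cong S^a$ and $N_2\cong T^b$, reduce the second vanishing to $\Tor^R_2(S,T)=0$, and refute that by the dimension shift $\Tor^R_2(S,T)\cong\Tor^R_1(\m_T,T)$ together with Lemma~\ref{lem160316a} (all steps of which check out: $\m_S\m_T=0$ so $\m_T$ is a $T$-module, $\Tor^T_1(\m_T,T)=0$, and the exponent $\beta^S_1(k)\beta^T_0(\m_T)$ is positive precisely because $S\neq k\neq T$). The paper's route stays entirely inside the Moore-resolution machinery and Lemma~\ref{lem160316b}, which keeps it uniform with the proof of Lemma~\ref{lem160316c}; your route costs one explicit computation of $\Tor^R_2(S,T)\neq 0$ (a nice complement to Remark~\ref{disc160317a}, which records $\Tor^R_1(S,T)=0$) but buys a slightly stronger statement, since you never invoke the first-syzygy hypotheses on $M_1,M_2,N_1,N_2$ --- the lemma as you prove it holds for arbitrary finitely generated $S$- and $T$-modules.
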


\begin{proof}
As in the beginning of the proof of Lemma~\ref{lem160316c},
we assume that $M_1\neq 0$, and conclude that $N_1=0$.
And we suppose by way of contradiction that $N_2\neq 0$.
An application of (a symmetric version of) Lemma~\ref{lem160316b} implies that $M_2=0$.

By Fact~\ref{fact160316a},
a minimal $R$-free presentation of $M_1=M$ has the following form:
\begin{equation*}
R^{\beta_1}\oplus R^{c_1\beta_0}
\xra{\left[\wti{d_1} \ \ \ol{g_1}\right]}
R^{\beta_0}.
\end{equation*}
In particular, we have
\begin{equation}\label{eq160316b'}
\im(\ol{g_1})=\im(g_1)R^{\beta_0}=\m_TR^{\beta_0}\neq 0
\end{equation}
where the non-vanishing  follows from the assumption  $T\neq k$.

Next, set $M'=\im[\wti{d_1} \ \ \ol{g_1}]\subseteq R^{\beta_0}$, i.e., $M'$ is the first syzygy of $M$ in the above minimal presentation.
Since $M$ is a first syzygy, the module $M'$ is a second syzygy,
and Fact~\ref{fact160316b} implies that $M'\cong M'_1\oplus M'_2$ where $\m_TM'_1=0=\m_SM'_2$.
Moreover, this Fact explains the first equality in the next display.
\begin{equation}\label{eq160316c}
M'_2=\im[\wti{d_1} \ \ \ol{g_1}]\cap(\m_TR^{\beta_0})\supseteq \im(\ol{g_1})\cap(\m_TR^{\beta_0})=\m_TR^{\beta_0}\neq 0
\end{equation}
The containment here is straightforward; the second equality and the non-vanishing are from~\eqref{eq160316b'}.

Dimension-shifting gives
$0=\Tor^R_2(M,N)
\cong\Tor^R_1(M',N)$.
The condition $M'_2\neq 0$ from~\eqref{eq160316c}, with Lemma~\ref{lem160316b} implies that $N_2=0$, contradicting the
supposition in the first paragraph of this proof.
In other words, we must have $N_2=0$. With the already established $N_1=0$, we conclude that $N=0$, as desired.
\end{proof}

Next, we establish Theorem~\ref{thm160319a}\eqref{thm160319a2} from the introduction.

\begin{thm}\label{thm160316b}
Let $M$ and $N$ be finitely generated modules over the fiber product $R$.
Then the following conditions are equivalent.
\begin{enumerate}[\rm(i)]
\item \label{thm160316b1}
$\Tor^R_i(M,N)=0$ for $i\gg 0$
\item \label{thm160316b2}
$\Tor^R_i(M,N)=0$ for all $i\geq 2$
\item \label{thm160316b3}
$\Tor^R_i(M,N)=0=\Tor^R_{i+1}(M,N)$ for some $i\geq 5$
\item \label{thm160316b4}
$\pd_R(M)<\infty$ or $\pd_R(N)<\infty$
\item \label{thm160316b5}
$\pd_R(M)\leq 1$ or $\pd_R(N)\leq 1$
\end{enumerate}
\end{thm}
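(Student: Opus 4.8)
The plan is to model the argument on the proof of Theorem~\ref{thm160316a}, replacing the depth-zero input Lemma~\ref{lem160316c} by the two-vanishing Lemma~\ref{lem160316d} and exploiting the universal bound $\depth(R)\leq 1$ coming from~\eqref{eq160315a}. The easy implications form a cycle $\eqref{thm160316b5}\implies\eqref{thm160316b4}\implies\eqref{thm160316b2}\implies\eqref{thm160316b1}\implies\eqref{thm160316b3}$, whose first and last two arrows are formal. The one arrow requiring thought is $\eqref{thm160316b4}\implies\eqref{thm160316b2}$: if, say, $\pd_R(M)<\infty$, then the Auslander--Buchsbaum formula gives $\pd_R(M)=\depth(R)-\depth(M)\leq\depth(R)\leq 1$ by~\eqref{eq160315a}, so $\Tor^R_i(M,N)=0$ for all $i\geq 2$.

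The substance lies in $\eqref{thm160316b3}\implies\eqref{thm160316b5}$. Assuming $\Tor^R_i(M,N)=0=\Tor^R_{i+1}(M,N)$ for some $i\geq 5$, I would let $M'$ be the second syzygy of $M$ and $N'$ the $(i-3)$rd syzygy of $N$, and dimension-shift to obtain $\Tor^R_1(M',N')\cong\Tor^R_i(M,N)=0$ and $\Tor^R_2(M',N')\cong\Tor^R_{i+1}(M,N)=0$. Since $i\geq 5$ forces $i-3\geq 2$, both $M'$ and $N'$ are at least second syzygies, so Fact~\ref{fact160316b} splits them as $M'\cong M'_1\oplus M'_2$ and $N'\cong N'_1\oplus N'_2$ with $M'_1,N'_1$ first syzygies over $S$ and $M'_2,N'_2$ first syzygies over $T$. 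Lemma~\ref{lem160316d} then forces $M'=0$ or $N'=0$. In the first case $\pd_R(M)\leq 1$ directly; in the second case $\pd_R(N)\leq i-4<\infty$, and a final application of Auslander--Buchsbaum together with $\depth(R)\leq 1$ upgrades this to $\pd_R(N)\leq 1$, yielding~\eqref{thm160316b5}.

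I expect the main obstacle to be organizational rather than analytic: the essential content is already packaged in Lemma~\ref{lem160316d}, so the work is in checking that its hypothesis---simultaneous vanishing of $\Tor^R_1$ and $\Tor^R_2$ of the syzygy modules---is precisely what the two consecutive vanishings in~\eqref{thm160316b3} supply after dimension-shifting. This is also the point that clarifies \emph{why} the present theorem requires two consecutive Tor-vanishings, in contrast to the single vanishing in Theorem~\ref{thm160316a}: Lemma~\ref{lem160316c} used the depth-zero hypothesis to conclude from $\Tor^R_1$ alone, whereas without that hypothesis Lemma~\ref{lem160316d} needs the extra vanishing of $\Tor^R_2$ to eliminate the surviving $T$-syzygy summand produced by~\eqref{eq160316c}.
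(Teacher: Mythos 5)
Your proposal is correct and is exactly the paper's argument: the authors' proof of this theorem consists of the single line ``argue as in the proof of Theorem~\ref{thm160316a}, using Lemma~\ref{lem160316d} in lieu of~\ref{lem160316c},'' and your write-up supplies precisely those details (the dimension-shift to $\Tor^R_1(M',N')=0=\Tor^R_2(M',N')$, the decomposition from Fact~\ref{fact160316b}, and the upgrade from $\pd_R<\infty$ to $\pd_R\leq 1$ via Auslander--Buchsbaum and $\depth(R)\leq 1$). Your closing remark about why two consecutive vanishings are needed here, in contrast to Theorem~\ref{thm160316a}, is also accurate.
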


\begin{proof}
Argue as in the proof of Theorem~\ref{thm160316a}, using Lemma~\ref{lem160316d}
in lieu of~\ref{lem160316c}.
\end{proof}

In the preceding result, the next remark  shows that even if one assumes that $\Tor^R_i(M,N)=0$ for all $i\geq 1$,
one cannot conclude that
$M$ or $N$ is free (unless, of course, one is in the setting of Theorem~\ref{thm160316a}).

\begin{disc}\label{ex160325a}
Indeed, assume that $S$ and $T$ have
positive depth, so we have $\depth(R)=1$ by equation~\eqref{eq160319a}.
Let $f\in \m_R$ be $R$-regular, and set $M:=R/fR$.
Then $\pd_R(M)=1$, so $\Tor^R_i(M,-)=0$ for all $i\geq 2$. By dimension-shifting,
if $N$ is a syzygy over $R$ of infinite projective dimension,
e.g., if $N$ is a non-principal ideal of $R$,
then $\Tor^R_i(M,N)=0$ for all $i\geq 1$, even though $M$ and $N$ are not free.
\end{disc}

In light of the preceding theorems and examples, we pose the following.

\begin{question}\label{q160317a}
Let $M$, $N$ be finitely generated modules over the fiber product~$R$.
If $\Tor^R_4(M,N)=0$, must one of the modules $M$, $N$ have finite projective dimension?
\end{question}

\section{Auslander's Depth Formula}\label{sec160325a}

In this section, we document some consequences of the above results for
Auslander's ``depth formula'' from~\cite[Theorem~1.2]{auslander:murlr}.
This subject has received considerable attention recently; see, e.g., work of
Araya and Yoshino~\cite{araya:rdf},
Christensen and Jorgensen~\cite{christensen:vth},
and Foxby~\cite{foxby:hdcm}.

For part of the proof of the first result of this section,
we work in the derived category $\catd(R)$ with objects equal to the $R$-complexes
(i.e., the chain complexes of $R$-modules) indexed homologically.
References for this include
Christensen, Foxby, and Holm~\cite{christensen:dcmca},
Hartshorne and Grothendieck~\cite{hartshorne:rad}, and
Verdier~\cite{verdier:cd, verdier:1}.
We say that an $R$-complex $X$ is \emph{homologically finite} if
the total homology module $\HH(X)=\bigoplus_{i\in\bbz}\HH_i(X)$ is finitely generated.
We set $\Ext^i_R(X,Y):=\HH_{-i}(\Rhom XY)$
and
$\depth_R(X)=\inf\{i\in\bbz\mid\Ext_R^i(k,X)\neq 0\}$.

\begin{thm}\label{cor160319a}
Let $M$, $N$ be non-zero finitely generated modules over the fiber product $R$ such that
$\Tor^R_i(M,N)=0$ for all $i\gg 0$. Set
$$q:=\max\{i\geq 0\mid\Tor^R_i(M,N)\neq 0\}.$$
Assume that at least one of the following conditions holds.
\begin{enumerate}[\rm(1)]
\item\label{cor160319a1}
$q=0$
\item\label{cor160319a2}
$\depth_R(\Tor^R_q(M,N))\leq 1$
\item\label{cor160319a3}
$\depth(S)=0$ or $\depth(T)=0$
\item\label{cor160319a4}
one of the modules $M$, $N$ is a syzygy of a finitely generated $R$-module
\item\label{cor160319a5}
one of the modules $M$, $N$ is $S$ or $T$
\item\label{cor160319a6}
$\depth(N)=0$ and $\pd_R(N)=\infty$
\end{enumerate}
Then we have $q\leq 1$ and
\begin{equation}\label{eq160319a}
\depth_R(M)+\depth_R(N)=\depth(R)+\depth_R(\Tor^R_q(M,N))-q.
\end{equation}
\end{thm}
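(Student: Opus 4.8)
The plan is to reduce to the case where one of the two modules has projective dimension at most~$1$, compute $\depth_R(M\lotimes_R N)$ via a derived depth formula, and then convert this into the statement about $\Tor^R_q(M,N)$ by a truncation argument, the various hypotheses being exactly what is needed to control the top homology.

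First I would invoke Theorem~\ref{thm160316b}: the standing hypothesis that $\Tor^R_i(M,N)=0$ for $i\gg 0$ forces $\pd_R(M)\leq 1$ or $\pd_R(N)\leq 1$. Since both the target formula~\eqref{eq160319a} and the derived tensor product are symmetric in $M$ and $N$, I may assume $\pd_R(N)\leq 1$, reversing the roles of $M$ and $N$ if necessary (this matters for the asymmetric hypothesis~(6)). Then $\Tor^R_i(M,N)=0$ for all $i\geq 2$, so $q\leq 1$ under each of the hypotheses (1)--(6); this already settles the first assertion.

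Next, working in $\catd(R)$, I note that $M\lotimes_R N$ is homologically finite with homology concentrated in degrees $0$ and $1$, namely $\HH_0(M\lotimes_R N)\cong\Tor^R_0(M,N)=M\otimes_R N$ and $\HH_1(M\lotimes_R N)\cong\Tor^R_1(M,N)$. Because $\pd_R(N)<\infty$, the derived depth formula gives
\[
\depth_R(M\lotimes_R N)=\depth_R(M)+\depth_R(N)-\depth(R),
\]
so, comparing with~\eqref{eq160319a}, the whole theorem reduces to the single equality $\depth_R(M\lotimes_R N)=\depth_R(\Tor^R_q(M,N))-q$. If $q=0$ (in particular under hypothesis~(1)) then $M\lotimes_R N\simeq M\otimes_R N$ and there is nothing to prove, so assume $q=1$, i.e.\ $\pd_R(N)=1$ and $\Tor^R_1(M,N)\neq 0$. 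Here I would use the truncation triangle
\[
\shift\Tor^R_1(M,N)\to M\lotimes_R N\to M\otimes_R N\to\shift^2\Tor^R_1(M,N)
\]
together with the long exact sequence obtained by applying $\Rhom{k}{-}$, which reads $\cdots\to\Ext^{i+1}_R(k,\Tor^R_1(M,N))\to\Ext^i_R(k,M\lotimes_R N)\to\Ext^i_R(k,M\otimes_R N)\to\Ext^{i+2}_R(k,\Tor^R_1(M,N))\to\cdots$. Inspecting where the groups $\Ext^i_R(k,M\lotimes_R N)$ first become nonzero shows that $\depth_R(M\lotimes_R N)=\depth_R(\Tor^R_1(M,N))-1$ as soon as $\depth_R(\Tor^R_1(M,N))\leq\depth_R(M\otimes_R N)+1$. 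Since $M\otimes_R N\neq 0$ forces $\depth_R(M\otimes_R N)\geq 0$, it suffices, in the case $q=1$, to prove the bound $\depth_R(\Tor^R_1(M,N))\leq 1$.

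Finally I would verify that each remaining hypothesis supplies this bound, or else forces $q=0$. Hypothesis~(2) is exactly the required inequality. Hypothesis~(3) gives $\depth(R)=0$ by~\eqref{eq160315a}, whence Auslander--Buchsbaum makes the finite projective dimension module free and $q=0$. Under~(6), after the role reversal $M$ is the module of finite projective dimension, a minimal presentation $R^a\to R^b\to M\to 0$ has differential with entries in $\m_R$, and a socle element $0\neq y\in N$ then yields $(y,0,\dots,0)\in\ker(N^a\to N^b)=\Tor^R_1(M,N)$, a socle element, so $\depth_R(\Tor^R_1(M,N))=0$. For~(4) and~(5) I would exploit the fiber-product structure: when the relevant module is a syzygy, dimension-shifting through Fact~\ref{fact160316b} either kills $\Tor^R_1$ outright (giving $q=0$) or, via Lemma~\ref{lem160316a}, exhibits a nonzero $k$-vector-space direct summand of $\Tor^R_1(M,N)$, which again forces depth~$0$; the modules $S$ and $T$ in~(5) are handled by the explicit computations of Lemma~\ref{lem160316a}. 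I expect this case analysis---extracting $\depth_R(\Tor^R_1)$ from the combinatorial shape of the minimal $R$-free resolutions---to be the main obstacle, the derived depth formula and the truncation step being comparatively formal.
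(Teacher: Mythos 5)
Your overall architecture is sound and, in places, genuinely different from the paper's. The reduction via Theorem~\ref{thm160316b} to one module having projective dimension at most $1$, followed by Foxby's derived depth formula and the reduction of~\eqref{eq160319a} to the single equality $\depth_R(\Lotimes MN)=\depth_R(\Tor^R_q(M,N))-q$, is correct; your truncation-triangle computation, showing this equality holds whenever $\depth_R(\Tor^R_1(M,N))\leq\depth_R(M\otimes_RN)+1$, amounts to reproving the special case of Auslander's depth formula that the paper simply cites for cases (1)--(2). Your treatment of (6) is also correct and more elementary than the paper's: the paper derives $\m_R\in\ass_R(\Tor^R_1(M,N))$ from $\depth_R(\Lotimes MN)=-1$ via \cite[(1.6.6)]{christensen:scatac}, whereas you exhibit an explicit socle element $(y,0,\dots,0)$ in the kernel of the minimal presentation matrix of $M$ tensored with $N$. (Minor point: your blanket normalization ``$\pd_R(N)\leq 1$'' is incompatible with hypothesis (6), which forces $\pd_R(N)=\infty$; fix the convention once and for all.)

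The gap is in cases (4) and (5), which you explicitly defer and for which the tools you name would not work: Lemma~\ref{lem160316a} requires both arguments to be $S$- or $T$-modules, which the arbitrary module $M$ is not, and Fact~\ref{fact160316b} concerns second syzygies, whereas neither is needed. The point you are missing is that these cases always collapse to case (1). If, say, $\pd_R(M)\leq 1$ and $N$ is a first syzygy of $L$, then dimension-shifting gives $\Tor^R_i(M,N)\cong\Tor^R_{i+1}(M,L)=0$ for all $i\geq 1$, so $q=0$; if instead the syzygy is the module of finite projective dimension, then the module it is a syzygy of also has finite projective dimension, hence projective dimension at most $\depth(R)\leq 1$ by Auslander--Buchsbaum, forcing that (nonzero) syzygy to be free and again $q=0$. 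Case (5) then reduces to (4): when $\depth(S)\geq 1$ (the case $\depth(S)=0$ being case (3)), any $S$-regular element $f\in\m_S$ gives $S\cong fS=fR$, so $S$ is a first syzygy of $R/fR$ over $R$, and symmetrically for $T$. In particular, your alternative branch ``a nonzero $k$-vector-space summand of $\Tor^R_1(M,N)$'' never actually occurs in these cases, and no appeal to the fiber-product structure of resolutions is required.
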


\begin{proof}
Theorem~\ref{thm160316b} implies that $q\leq 1$ and, say, $\pd_R(M)\leq 1$.
Thus it remains to establish equation~\eqref{eq160319a} under any of the conditions~\eqref{cor160319a1}--\eqref{cor160319a6}.
In cases~\eqref{cor160319a1}--\eqref{cor160319a2} this is from~\cite[Theorem~1.2]{auslander:murlr}.
Theorem~\ref{thm160316a} shows that
\eqref{cor160319a3}$\implies$\eqref{cor160319a1}. In particular, we assume for the remainder of this proof that
$\depth(S),\depth(T)\geq 1$.

\eqref{cor160319a4}
In the case where $N$ is a syzygy of some finitely generated $R$-module $L$,
for $i\geq 1$ we have
$$\Tor^R_i(M,N)\cong\Tor^R_{i+1}(M,L)=0$$
since $\pd_R(M)\leq 1$, by dimension-shifting.
On the other hand, if $M$ is a syzygy of $L$, we have $\pd_R(L)\leq \depth(R)=1$ by Auslander-Buchsbaum,
so $M$ is free. In each case, we conclude that $q=0$, so we are done by case~\eqref{cor160319a1}.

\eqref{cor160319a5}
The assumption $\depth(S)\geq 1$ yields an $S$-regular element $f\in \m_S$.
It follows readily that $fR\cong fS\cong S$ so $S$ is a syzygy over $R$, namely, the first syzygy of $R/fR$.
Symmetrically, we see that $T$ is an $R$-syzygy, so this case follows from the preceding one.

\eqref{cor160319a6}
Assume that $\depth(N)=0$ and $\pd_R(N)=\infty$. Assume further that
$q=1=\pd_R(M)$ and $\depth(S),\depth(T)\geq 1$, otherwise we are in the situation of case~\eqref{cor160319a1} or~\eqref{cor160319a3}.
Thus, we have $\depth(R)=1$ by equation~\eqref{eq160315a} from the introduction, and $\depth_R(M)=0$ by the Auslander-Buchsbaum formula.
We need to show that
$$\depth_R(M)+\depth_R(N)=\depth(R)+\depth_R(\Tor^R_1(M,N))-1.$$
In light of our assumptions, this reduces to showing that
$\depth_R(\Tor^R_1(M,N))=0$.
That is, we need to show that $\m_R\in\ass_R(\Tor^R_1(M,N))$.

From~\cite[Lemma~2.1]{foxby:hdcm}, we have
\begin{align*}
\depth_R(\Lotimes MN)
&=\depth_R(M)+\depth_R(N)-\depth(R)\\
&=-1 \\
&=-\sup\{i\in\bbz\mid\Tor^R_i(M,N)\neq 0\}.
\end{align*}
Thus, according to~\cite[(1.6.6)]{christensen:scatac}, we have
$\m_R\in\ass_R(\Tor^R_1(M,N))$, as desired.
\end{proof}

Because of the strength of Theorems~\ref{thm160316a} and~\ref{thm160316b},
we were surprised to find the next examples which show that the depth formula~\eqref{eq160319a}
fails over the fiber product $R$ in the general case $q=1$ when $\depth(S),\depth(T)\geq 1$.

\begin{ex}\label{ex160319a}
Set $S:=k[\![U,V,W]\!]$ and $T:=k[\![X,Y,Z]\!]$ with prime ideals
$\p:=US$ and $\q:=(Y,Z)T$. We consider the fiber product $R$ and the $R$-modules
$M=R/(U+X)R$ and $N=S/\p\oplus T/\q$.
Since $U+X$ is $R$-regular, we have $\pd_R(M)=1$.
Using the minimal free resolution
$0\to R\xra{U+X}R\to M\to 0$
we have the first isomorphism in the next display
\begin{align*}
\Tor^R_1(M,N)
&\cong\ker\left(N\xra{U+X}N\right)\\
&\cong\ker\left(S/\p\xra{U+X}S/\p\right)\oplus\ker\left(T/\q\xra{U+X}T/\q\right)\\
&\cong\ker\left(S/\p\xra{U}S/\p\right)\oplus\ker\left(T/\q\xra{X}T/\q\right)\\
&\cong S/\p.
\end{align*}
The second isomorphism is from the definition of $N$,
and the third one follows from the vanishings $X(S/\p)=0=U(T/\q)$.
For the last isomorphism here, note that $X$ is $(T/\q)$-regular and that $U(S/\p)=0$.

From this, we have
$$\depth_R(\Tor^R_1(M,N))=\depth_R(S/\p)=2$$
and the next display is by construction
$$\depth_R(N)=\min\{\depth_R(S/\p),\depth_R(T/\q)\}=\min\{2,1\}=1.$$
As we have $\depth(R)=1$ by equation~\eqref{eq160315a} from the introduction, the fact that $U+X$ is $R$-regular implies
$\depth_R(M)=0$. From these facts, we find that
$$\depth_R(M)+\depth_R(N)=1<2=
\depth(R)+\depth_R(\Tor^R_1(M,N))-1
$$
so the depth formula~\eqref{eq160319a} fails here.
\end{ex}

\begin{ex}\label{ex160319b}
Set $S=k[\![u,x,y,z,a]\!]$ and $T=k[\![v]\!]$.
In~\cite[Example~2.9]{araya:rdf}, the authors construct an ideal $I\subseteq S$ such that the module $N:=S/I$
satisfies $\depth_S(N)=3$ and $\depth_S(\Tor^S_1(S/aS,N))=2$.
The module $S/aS$ has projective dimension 1 over $S$, and one checks easily that
$$
\depth_S(S/aS)+\depth_S(N)=7>6
=\depth(S)+\depth_S(\Tor^S_1(S/aS,N))-1.
$$
Let $R$ be the fiber product, as usual, and set $M=R/(a+v)R$.
By equation~\eqref{eq160315a} from the introduction, we have $\depth(R)=1$.
The element $a+v$ is $R$-regular, so $\pd_R(M)=1$. Also, one has
$\depth_R(N)=\depth_S(N)=3$. In the next display, the isomorphisms are routine,
and the equality is from the condition $vN=0$.
\begin{align*}
\Tor^R_1(M,N)
&\cong\ker(N\xra{a+v}N)
=\ker(N\xra aN)\cong\Tor^S_1(S/aS,N)
\end{align*}
We conclude that
$\depth_R(\Tor^R_1(M,N))=\depth_S(\Tor^S_1(S/aS,N))=2$, so
$$\depth_R(M)+\depth_R(N)=3>2
=\depth(R)+\depth_R(\Tor^R_1(M,N))-1
$$
so the depth formula~\eqref{eq160319a} fails here, in the opposite way from Example~\ref{ex160319a}.
\end{ex}

The next result shows that the previous example is, in a sense, minimal with respect to the
particular failure of the depth formula~\eqref{eq160319a}.

\begin{prop}\label{prop160319a}
Let $(A,\m_A)$ be a commutative local noetherian ring, and let $M$, $N$ be finitely generated
$A$-modules such that $\pd_A(M)=1$.
If $\depth_A(N)\leq 2$, then
$$\depth_A(M)+\depth_A(N)\leq\depth(A)+\depth_A(\Tor^A_1(M,N))-1.$$
\end{prop}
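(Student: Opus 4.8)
The plan is to first collapse the stated inequality to a single depth comparison using the Auslander--Buchsbaum formula, and then to prove that comparison by two applications of the depth lemma. Since $\pd_A(M)=1<\infty$, Auslander--Buchsbaum gives $\depth_A(M)=\depth(A)-1$; substituting this into the claimed inequality and cancelling $\depth(A)-1$ from both sides, one sees that the proposition is equivalent to the single inequality
$$\depth_A(N)\leq\depth_A(\Tor^A_1(M,N)),$$
to be proved under the hypothesis $\depth_A(N)\leq 2$. Note that both modules are nonzero here: $M\neq 0$ because $\pd_A(M)=1$, and $N\neq 0$ because $\depth_A(N)\leq 2<\infty$.

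Second, I would realize $\Tor^A_1(M,N)$ concretely. Because $\pd_A(M)=1$, a minimal free resolution has the form $0\to A^m\xra{\varphi}A^n\to M\to 0$ with $m\geq 1$. Tensoring with $N$ produces the exact sequence $N^m\xra{\psi}N^n\to M\otimes_A N\to 0$, where $\psi=\varphi\otimes_A N$, which identifies $T:=\Tor^A_1(M,N)=\ker(\psi)$ as a \emph{submodule} of $N^m$. Writing $K:=\im(\psi)$, this gives two short exact sequences
$$0\to T\to N^m\to K\to 0\qquad\text{and}\qquad 0\to K\to N^n\to M\otimes_A N\to 0,$$
the cokernel in the second being $\coker(\psi)\cong M\otimes_A N$, which is nonzero by Nakayama's lemma since $M$ and $N$ are nonzero and finitely generated.

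Third, I would run the depth lemma (the estimate $\depth_A(X)\geq\min\{\depth_A(Y),\depth_A(Z)+1\}$ for $0\to X\to Y\to Z\to 0$) along these two sequences. If $K=0$ then $T=N^m$ and $\depth_A(T)=\depth_A(N)$, so assume $K\neq 0$. The second sequence gives $\depth_A(K)\geq\min\{\depth_A(N^n),\depth_A(M\otimes_A N)+1\}\geq\min\{\depth_A(N),1\}$, using only that $\depth_A(M\otimes_A N)\geq 0$. Feeding this into the first sequence, and recalling $\depth_A(N^m)=\depth_A(N)$, yields
$$\depth_A(T)\geq\min\{\depth_A(N),\,\depth_A(K)+1\}\geq\min\{\depth_A(N),2\}.$$
The main obstacle --- and precisely the reason for the hypothesis $\depth_A(N)\leq 2$ --- is this final step: the two-step depth-lemma estimate can only propagate depth up to $2$, so it forces the conclusion only when $\min\{\depth_A(N),2\}=\depth_A(N)$, i.e.\ when $\depth_A(N)\leq 2$. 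For $\depth_A(N)\geq 3$ the bound stalls at $\depth_A(T)\geq 2$ and need not reach $\depth_A(N)$, which is exactly the behavior underlying the failure of the depth formula in Example~\ref{ex160319b}.
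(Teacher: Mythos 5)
Your proposal is correct and follows essentially the same route as the paper's proof: the minimal resolution $0\to A^m\to A^n\to M\to 0$, the resulting four-term exact sequence broken into two short exact sequences, two applications of the Depth Lemma yielding $\depth_A(\Tor^A_1(M,N))\geq\min\{\depth_A(N),2\}=\depth_A(N)$, and the Auslander--Buchsbaum formula to finish. The only differences are organizational (you fold the $\depth_A(N)=0$ case into the general computation and note the degenerate case $K=0$ explicitly), so there is nothing substantive to add.
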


\begin{proof}
In the case $\depth_R(N)=0$, the desired inequality follows by
the Auslander-Buchsbaum formula for $M$.
So, we assume for the rest of the proof that $\depth_A(N)$ is 1 or 2.
Consider a minimal free resolution
$$0\to A^m\to A^n\to M\to 0$$
wherein $m,n\neq 0$ since $\pd_A(M)=1$.
It follows that there is an exact sequence
$$0\to \Tor^A_1(M,N)\to N^m\to N^n\to M\otimes_AN\to 0.$$
Break this into short exact sequences
$$0\to \Tor^A_1(M,N)\to N^m\to V\to 0\qquad\qquad 0\to V\to N^n\to M\otimes_AN\to 0$$
and apply the Depth Lemma; the assumption $1\leq\depth_A(N)\leq 2$ implies that
\begin{align*}
\depth_A(V)
&\geq\min\{\depth_A(N),\depth_A(M\otimes_AN)+1\}\geq 1\\
\depth_A(\Tor^A_1(M,N))
&\geq\min\{\depth_A(N),\depth_A(V)+1\}
\geq\depth_A(N).
\end{align*}
In light of the Auslander-Buchsbaum formula for $M$, which has projective dimension 1,
the preceding display yields the desired inequality.
\end{proof}

\section{Ext-Vanishing}\label{sec160319d}

The results of this section, like those of Section~\ref{sec160319b},
give counterpoints to the theme discussed in Section~\ref{sec160319a}: the conclusions here hold over the fiber product $R$,
whether or not they hold over $S$ or $T$.

It is worth noting here that much of the machinery we invoke in the proof of the next result
is developed in significant generality in the forthcoming~\cite{avramov:phcnr}.
Thus, we only sketch the proof.

\begin{prop}\label{prop160316a}
Let $M$ and $N$ be homologically finite complexes over the fiber product $R$
such that $\Ext_R^i(M,N)=0$ for $i\gg 0$, then $\pd_R(M)<\infty$ or $\id_R(N)<\infty$.
\end{prop}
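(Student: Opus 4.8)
The plan is to reduce this Ext-vanishing statement to the Tor-vanishing result of Theorem~\ref{thm160316b} by a duality argument, exploiting the fact that over any local ring one has an isomorphism $\Ext^i_R(M,N)\cong\Tor^R_i(M,N^{\vee})$ when $N$ has finite injective dimension (via Matlis duality or, more robustly in the derived setting, $\Rhom{M}{N}\simeq \Rhom{\Lotimes{M}{N^{\vee}}}{E}$ for a suitable dualizing/injective object $E$). First I would pass to the derived category $\catd(R)$ and replace $M,N$ by homologically finite complexes, noting that the hypothesis $\Ext^i_R(M,N)=0$ for $i\gg 0$ means $\Rhom{M}{N}$ is homologically bounded. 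The key structural input is the machinery of~\cite{avramov:phcnr} alluded to just before the statement: over a fiber product, homologically finite complexes decompose (after taking syzygies) into pieces pulled back from $S$ and from $T$, exactly as in Fact~\ref{fact160316b}, and this is what lets one transfer a \emph{boundedness} statement about $\ext$ into a \emph{vanishing} statement about $\tor$.

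The central step is the passage from $\ext$-boundedness to $\tor$-vanishing. I would introduce an appropriate Matlis-type dual $N^{\vee}$ (or work with the injective resolution of $N$) so that $\Ext^i_R(M,N)=0$ for $i\gg 0$ translates into $\Tor^R_i(M,N')=0$ for $i\gg 0$ for a companion complex $N'$ built from $N$. Concretely, if $\id_R(N)<\infty$ we are already done, so the content is to show that if $\id_R(N)=\infty$ then the eventual vanishing of $\ext$ forces $\pd_R(M)<\infty$. The dual of an infinite injective resolution behaves like an infinite flat/projective resolution, and the vanishing $\Ext^i_R(M,N)=0$ for large $i$ becomes $\Tor^R_i(M,-)=0$ for large $i$ against a module of infinite projective dimension. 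Theorem~\ref{thm160316b}, specifically the implication \eqref{thm160316b1}$\implies$\eqref{thm160316b4}, then yields $\pd_R(M)<\infty$ (or the companion module has finite projective dimension, which feeds back to $\id_R(N)<\infty$). The dichotomy ``$\pd_R(M)<\infty$ or $\id_R(N)<\infty$'' mirrors exactly the dichotomy ``$\pd_R(M)<\infty$ or $\pd_R(N)<\infty$'' of Theorem~\ref{thm160316b}, with the right-hand alternative converted through the duality.

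The main obstacle I anticipate is making the duality precise for \emph{complexes} rather than modules, and ensuring the finiteness and boundedness hypotheses are preserved under it. For modules the isomorphism $\Ext^i_R(M,N)\cong\Tor^R_i(M,N^{\vee})$ (with $N^{\vee}$ the Matlis dual) is standard, but here $N$ is a homologically finite complex and $N^{\vee}$ need not be homologically finite, so one must either complete to reduce to the Artinian-dual situation or invoke the general foundational results of~\cite{avramov:phcnr} to control the homology of $\Rhom{M}{N}$ and of the dual complex. There is also the bookkeeping point that $\id_R(N)<\infty$ and $\pd_R(N^{\vee})<\infty$ must be matched correctly so that the ``free/finite-projective'' conclusion of Theorem~\ref{thm160316b} on the dual side corresponds to finite injective dimension on the original side. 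Since this is where the heavy lifting lives and it is developed in the cited forthcoming work, the write-up will only sketch it: reduce to the complete case, dualize to convert $\ext$-vanishing into $\tor$-vanishing, apply Theorem~\ref{thm160316b}, and dualize back.
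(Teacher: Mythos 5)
Your overall architecture matches the paper's: dualize into the injective hull $E=E_R(k)$ to convert the hypothesis $\Ext_R^i(M,N)=0$ for $i\gg0$ into a Tor-vanishing statement (via the Hom-evaluation isomorphism $\Rhom{\Rhom MX}{E}\simeq\Lotimes{M}{\Rhom XE}$) and then invoke Theorem~\ref{thm160316b}. You also correctly identify the central obstacle: the Matlis dual of $N$ need not be homologically finite, while Theorem~\ref{thm160316b} is a statement about finitely generated modules.

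However, your proposed remedy---pass to the completion---does not close this gap. Even over a complete local ring, the Matlis dual of a finitely generated module is artinian rather than finitely generated unless the module has finite length, so $N^{\vee}$ is still not an object to which Theorem~\ref{thm160316b} (or its extension to homologically finite complexes obtained by taking sufficiently high syzygies in minimal free resolutions, a preliminary step you should record explicitly) can be applied. The missing idea is to first replace $N$ by $\Lotimes KN=K\otimes_RN$, where $K$ is the Koszul complex on a generating sequence of $\m_R$. This has two effects: first, $\HH(\Lotimes KN)$ is annihilated by $\m_R$ and hence has finite length, so $\Rhom{\Lotimes KN}{E}$ again has finite-length homology and in particular is homologically finite, which makes the Tor-vanishing theorem applicable to the pair $\bigl(M,\Rhom{\Lotimes KN}{E}\bigr)$; second, $\id_R(\Lotimes KN)<\infty$ if and only if $\id_R(N)<\infty$, so the conclusion $\pd_R(\Rhom{\Lotimes KN}{E})<\infty$, equivalently $\id_R(\Lotimes KN)<\infty$, transfers back to $N$. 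Without this reduction to finite-length homology (or an equivalent device), the step ``dualize and apply Theorem~\ref{thm160316b}'' does not go through as written.
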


\begin{proof}
First, note that if $X$, $Y$ are homologically finite $R$-complexes such that
$\Tor^R_i(X,Y)=0$ for $i\gg0$, then  $\pd_R(X)<\infty$ or $\pd_R(Y)<\infty$.
Indeed, take sufficiently high syzygies in minimal $R$-free resolutions of $X$ and $Y$.
These are finitely generated and Tor-independent,
so the conclusion follows from Theorem~\ref{thm160316b}.

Let $K$ denote the Koszul complex over $R$ on a finite generating sequence for $\m_R$.
It follows that the complex  $\Lotimes KN=K\otimes_RN$
is homologically finite and, furthermore, has homology annihilated by $\m_R$.
Thus, it has finite length total homology module.
Moreover, we have $\id_R(\Lotimes KN)<\infty$ if and only if $\id_R(N)<\infty$.

Let $E:=E_R(k)$ be the injective hull of $k$ over $R$.
Since $\HH(\Lotimes KN)$ has finite length, so does $\HH(\Rhom{\Lotimes KN}E)$;
in particular, $\Rhom{\Lotimes KN}E$ is homologically finite.
With the ``Hom-evaluation'' isomorphism
of Avramov and Foxby~\cite[Lemma~4.4(I)]{avramov:hdouc}
\begin{align*}
\Rhom{\Rhom M{\Lotimes KN}}{E}
&\simeq\Lotimes M{\Rhom {\Lotimes KN}E}
\end{align*}
the preceding paragraph implies
that $\pd_R(M)<\infty$ or $\pd_R(\Rhom {\Lotimes KN}E)<\infty$.
It follows readily that $\pd_R(M)<\infty$ or $\id_R(\Lotimes KN)<\infty$,
i.e., $\pd_R(M)<\infty$ or $\id_R(N)<\infty$,  as desired.
\end{proof}

Next, we document some consequences of the preceding proposition.

\begin{cor}\label{cor160316a}
Let $M$, $N$ be finitely generated modules over the fiber product $R$.
If $\Ext_R^i(M,N)=0$ for $i\gg 0$, then $\pd_R(M)\leq 1$ or $\id_R(N)\leq 1$.
\end{cor}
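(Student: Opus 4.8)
The plan is to obtain this as a near-immediate consequence of Proposition~\ref{prop160316a}, followed by a sharpening of the two finiteness conclusions. First I would view the finitely generated modules $M$ and $N$ as homologically finite $R$-complexes concentrated in degree $0$; then the hypothesis $\Ext_R^i(M,N)=0$ for $i\gg 0$ is exactly the input needed to apply Proposition~\ref{prop160316a}. That proposition yields the dichotomy $\pd_R(M)<\infty$ or $\id_R(N)<\infty$, so the only work remaining is to replace each ``$<\infty$'' with ``$\leq 1$''.

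The key observation is that a non-trivial fiber product always has small depth: equation~\eqref{eq160315a} from the introduction gives $\depth(R)=\min\{\depth(S),\depth(T),1\}\leq 1$. In the case $\pd_R(M)<\infty$, the Auslander-Buchsbaum formula gives $\pd_R(M)=\depth(R)-\depth_R(M)\leq\depth(R)\leq 1$, where the first inequality uses $\depth_R(M)\geq 0$. In the case $\id_R(N)<\infty$, the Bass formula gives $\id_R(N)=\depth(R)\leq 1$. In either case the desired conclusion $\pd_R(M)\leq 1$ or $\id_R(N)\leq 1$ follows at once.

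Since essentially all of the substantive content is carried by Proposition~\ref{prop160316a} (and hence, through it, by Theorem~\ref{thm160316b}), there is no genuine obstacle at this stage; the single point that makes the sharper bound ``$\leq 1$'' possible is the inequality $\depth(R)\leq 1$, which forces any finite projective or injective dimension over $R$ to be at most one. If one preferred to avoid citing the Bass formula explicitly, one could instead argue that a finitely generated module of finite injective dimension over a local ring of depth at most $1$ must have injective dimension at most $1$, which gives the injective-dimension case directly.
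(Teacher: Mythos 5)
Your proof is correct and follows the paper's own argument essentially verbatim: apply Proposition~\ref{prop160316a} to get the dichotomy $\pd_R(M)<\infty$ or $\id_R(N)<\infty$, then invoke the Auslander--Buchsbaum and Bass formulas together with $\depth(R)\leq 1$ from equation~\eqref{eq160315a} to sharpen both bounds to $\leq 1$. No gaps.
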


\begin{proof}
This follows from Proposition~\ref{prop160316a}, via the Auslander-Buchsbaum and Bass formulas,
since $\depth(R)\leq 1$ by equation~\eqref{eq160315a}.
\end{proof}

\begin{cor}\label{cor160317a}
Assume that $\depth(S)=0$ and that $S$ or $T$ is not artinian.
Let $M$, $N$ be non-zero finitely generated modules over the fiber product $R$.
If $\Ext_R^i(M,N)=0$ for $i\gg 0$, then $M$ is $R$-free.
\end{cor}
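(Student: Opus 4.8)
The plan is to feed the hypotheses into Corollary~\ref{cor160316a} and then discard one of the two resulting cases by means of a Cohen--Macaulayness obstruction. First I would extract two facts about $R$. The assumption $\depth(S)=0$ together with the depth formula~\eqref{eq160315a} gives $\depth(R)=\min\{0,\depth(T),1\}=0$. Also, since $S\cong R/\m_T$ and $T\cong R/\m_S$ are quotient rings of $R$, we have $\dim(R)\geq\max\{\dim(S),\dim(T)\}$, and the hypothesis that $S$ or $T$ is not artinian forces $\dim(R)\geq1$. Hence $\depth(R)=0<\dim(R)$, so $R$ is \emph{not} Cohen--Macaulay.

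Corollary~\ref{cor160316a} applied to $M$, $N$ yields the dichotomy $\pd_R(M)\leq1$ or $\id_R(N)\leq1$. I would rule out the second alternative as follows: if $\id_R(N)\leq1<\infty$, then since $N\neq0$ is a finitely generated $R$-module of finite injective dimension, Bass's theorem (the former Bass conjecture) forces $R$ to be Cohen--Macaulay, contradicting the previous paragraph. Therefore $\pd_R(M)\leq1<\infty$.

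It remains to improve finiteness of projective dimension to freeness. Since $\pd_R(M)<\infty$, the Auslander--Buchsbaum formula gives $\pd_R(M)=\depth(R)-\depth_R(M)=-\depth_R(M)\leq0$, whence $\pd_R(M)=0$ and $M$ is $R$-free, as desired.

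The only non-formal ingredient is the non-Cohen--Macaulay obstruction used to discard the case $\id_R(N)<\infty$, and this is precisely where the hypothesis that $S$ or $T$ is not artinian enters: were both $S$ and $T$ artinian, then $R$ itself would be artinian, hence Cohen--Macaulay, and a nonzero module of finite injective dimension could well exist, breaking the argument. Everything else is routine depth/dimension bookkeeping for the fiber product together with a one-line Auslander--Buchsbaum computation, so I anticipate no serious obstacle beyond correctly invoking Bass's theorem.
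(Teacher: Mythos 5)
Your proof is correct, and its skeleton matches the paper's: reduce to Corollary~\ref{cor160316a}, eliminate the injective branch using the non-artinian hypothesis, and finish with Auslander--Buchsbaum at $\depth(R)=0$. The one genuine divergence is how you kill the case $\id_R(N)<\infty$. You invoke the Bass conjecture (Peskine--Szpiro/Roberts): a nonzero finitely generated module of finite injective dimension forces $R$ to be Cohen--Macaulay, which you contradict via $\depth(R)=0<\dim(R)$. That works, but it calls on a theorem whose general proof rests on the New Intersection Theorem. The paper gets the same conclusion much more cheaply: since $\depth(R)=0$, the Bass formula (already in play for Corollary~\ref{cor160316a}) turns $\id_R(N)\leq 1$ into $\id_R(N)=\depth(R)=0$, i.e., $N$ is a nonzero finitely generated \emph{injective} module; but a non-artinian local noetherian ring has no such module (by Matlis theory, a finitely generated injective is a finite sum of copies of $E_R(k)$, which is finitely generated only when $R$ is artinian), and $R$ is non-artinian because it surjects onto $S$ and $T$, one of which is not artinian. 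So both arguments are valid; yours trades an elementary structural fact about injectives for a deep homological theorem, which buys nothing here but is not wrong. Your final Auslander--Buchsbaum step and your remark on why the non-artinian hypothesis is needed are both fine.
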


\begin{proof}
Equation~\eqref{eq160315a} shows that $\depth(R)=0$.
Thus, the preceding result (with the Auslander-Buchsbaum and Bass formulas)
shows that $M$ is free or $N$ is injective.
Since $R$ surjects onto $S$ and $T$, one of which is not artinian, we know that $R$ is not artinian,
so it does not have a finitely generated injective module. In particular, $N$ is not injective, so $M$ is free.
\end{proof}

The next result is immediate from the previous one.
It says that the fiber product $R$ has ``Ext-index'' at most $\depth(R)\leq 1$.
See equation~\eqref{eq160315a}.

\begin{cor}\label{cor160316c}
Let $M$, $N$ be finitely generated modules over the fiber product $R$ such that
$\Ext_R^i(M,N)=0$ for $i\gg 0$. Then $\Ext_R^i(M,N)=0$ for all $i>\depth(R)$.
\end{cor}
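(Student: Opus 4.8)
The plan is to deduce this directly from Corollary~\ref{cor160316a} together with the classical Auslander--Buchsbaum and Bass formulas; no new computation should be needed. Assuming $M,N\neq 0$ (the statement being vacuous otherwise), I would feed the hypothesis $\Ext_R^i(M,N)=0$ for $i\gg 0$ into Corollary~\ref{cor160316a} to obtain $\pd_R(M)\leq 1$ or $\id_R(N)\leq 1$. The point I would extract is not the numerical bound itself but the \emph{finiteness} it entails: at least one of $\pd_R(M)$, $\id_R(N)$ is finite, and this is exactly what pins the vanishing threshold to $\depth(R)$.

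First I would treat the case $\pd_R(M)<\infty$. Here the Auslander--Buchsbaum formula gives $\pd_R(M)=\depth(R)-\depth_R(M)\leq\depth(R)$, since $\depth_R(M)\geq 0$. As $\Ext_R^i(M,N)=0$ for every $i>\pd_R(M)$, and every $i>\depth(R)$ satisfies $i>\depth(R)\geq\pd_R(M)$, I conclude $\Ext_R^i(M,N)=0$ for all $i>\depth(R)$, as required.

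The symmetric case is $\id_R(N)<\infty$, where the Bass formula gives $\id_R(N)=\depth(R)$ exactly. Then $\Ext_R^i(M,N)=0$ for all $i>\id_R(N)=\depth(R)$, again as required. Since Corollary~\ref{cor160316a} guarantees that at least one of these two cases occurs, this finishes the argument.

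I do not anticipate a genuine obstacle: the real work is already hidden in Corollary~\ref{cor160316a} (and ultimately in Theorem~\ref{thm160316b}). The only subtlety worth flagging in the write-up is that one should resist reading ``$\pd_R(M)\leq 1$'' as the source of the cutoff; when $\depth(R)=0$ the homological formulas force $\pd_R(M)=0$ or $\id_R(N)=0$, so the bound $\depth(R)\leq 1$ coming from equation~\eqref{eq160315a} is merely consistent background, while it is the Auslander--Buchsbaum and Bass formulas, not the constant $1$, that produce the sharp threshold at $\depth(R)$.
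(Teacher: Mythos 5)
Your argument is correct and is essentially the argument the paper has in mind: the paper declares the corollary ``immediate from the previous one,'' and the content it is suppressing is exactly your deduction from Corollary~\ref{cor160316a} via the Auslander--Buchsbaum and Bass formulas, including the key point that finiteness of $\pd_R(M)$ or $\id_R(N)$ (not the bound $1$) pins the cutoff at $\depth(R)$ even when $\depth(R)=0$. Nothing further is needed.
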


The next result says in particular that the fiber product $R$ satisfies the Auslander-Reiten Conjecture.
It contains Theorem~\ref{thm160319b} from the introduction, and it
follows, e.g., from the previous corollary and~\cite[Theorem~2.3]{christensen:asacvc}.

\begin{thm}\label{cor160316b}
Let $M$ be a finitely generated module over the fiber product $R$.
\begin{enumerate}[\rm(a)]
\item\label{cor160316b1}
If $\Ext_R^i(M,M\oplus R)=0$ for $i\gg 0$, then $\pd_R(M)\leq 1$.
\item\label{cor160316b1}
If $\Ext_R^i(M,M\oplus R)=0$ for $i\geq 1$, then $M$ is $R$-free.
\end{enumerate}
\end{thm}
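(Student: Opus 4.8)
The plan is to derive this theorem from the machinery already assembled, chiefly Corollary~\ref{cor160316a} (the Auslander--Buchsbaum/Bass consequence of Ext-vanishing), together with the citation to~\cite[Theorem~2.3]{christensen:asacvc}. For part~\eqref{cor160316b1}, I would first observe that the hypothesis $\Ext_R^i(M,M\oplus R)=0$ for $i\gg 0$ splits as $\Ext_R^i(M,M)=0$ and $\Ext_R^i(M,R)=0$ for $i\gg 0$. Applying Corollary~\ref{cor160316a} to the pair $(M,M)$ gives $\pd_R(M)\leq 1$ or $\id_R(M)\leq 1$. The first alternative is exactly what we want, so the task reduces to ruling out the case $\id_R(M)\leq 1$ while $\pd_R(M)=\infty$. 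Here I would invoke the vanishing $\Ext_R^i(M,R)=0$ for $i\gg 0$: applying Corollary~\ref{cor160316a} to $(M,R)$ forces $\pd_R(M)\leq 1$ or $\id_R(R)\leq 1$, and since $\id_R(R)<\infty$ would make $R$ Gorenstein (hence, by the depth formula~\eqref{eq160315a}, force $\depth(R)\leq 1$ with strong consequences), I expect one can exclude that branch directly or simply conclude $\pd_R(M)\leq 1$ from this pairing alone.

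For part~\eqref{cor160316b1} (the Auslander--Reiten statement), the cleanest route is to combine part~(a) with~\cite[Theorem~2.3]{christensen:asacvc}, exactly as the remark preceding the theorem advertises. Part~(a) yields $\pd_R(M)\leq 1$ from the weaker hypothesis that the Ext-groups vanish for large $i$; the stronger hypothesis $\Ext_R^i(M,M\oplus R)=0$ for \emph{all} $i\geq 1$ then feeds into the cited Auslander--Reiten-type criterion. Concretely, once $\pd_R(M)\leq 1$ is known, a module with $\pd_R(M)=1$ admits a length-one free resolution $0\to F_1\to F_0\to M\to 0$, and the vanishing $\Ext_R^1(M,M\oplus R)=0$ should be shown to split this resolution, forcing $M$ to be free. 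I would carry this out by tracking the connecting maps in the long exact sequence in $\Ext_R(-,M\oplus R)$ associated to that short exact sequence.

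The main obstacle I anticipate is the reduction step in part~(a): eliminating the spurious alternative $\id_R(M)\leq 1$ with $\pd_R(M)=\infty$. The symmetric application of Corollary~\ref{cor160316a} to $(M,R)$ is the natural tool, but one must be careful that $\id_R(R)\leq 1$ does not accidentally hold — this is where the non-Gorenstein nature of a non-trivial fiber product $R$ (reflected in the failure of Cohen--Macaulayness noted after equation~\eqref{eq160315a}) must be used. I expect that a clean way around this is to note that $\Ext_R^i(M,R)=0$ for $i\gg 0$ already forces $\pd_R(M)<\infty$ by Proposition~\ref{prop160316a} applied to $(M,R)$ (taking $N=R$, so $\id_R(N)<\infty$ would mean $R$ is Gorenstein), and then Corollary~\ref{cor160316a} upgrades this to $\pd_R(M)\leq 1$; so the $(M,M)$ pairing may not even be needed for part~(a), making that branch the efficient one. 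The remaining delicate point is the splitting argument in part~(b), where I must verify that the relevant $\Ext^1$ indeed governs the splitting of the presentation rather than merely its extension class in a weaker sense, but this is standard once $\pd_R(M)\leq 1$ is in hand.
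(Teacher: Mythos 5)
Your overall strategy for part (a) is workable, but the pivotal step you yourself flag --- ``eliminating the spurious alternative'' --- is handled incorrectly, and the shortcut you propose in its place fails. A non-trivial fiber product can perfectly well be Gorenstein with $\id_R(R)=1$: take $S=k[\![x]\!]$ and $T=k[\![y]\!]$, so that $R\cong k[\![x,y]\!]/(xy)$ is a one-dimensional hypersurface. For such $R$ the alternative $\id_R(R)\leq 1$ in Corollary~\ref{cor160316a} applied to $(M,R)$ cannot be excluded; worse, $\Ext_R^i(M,R)=0$ for $i\gg 0$ then holds for \emph{every} maximal Cohen--Macaulay $M$, including ones of infinite projective dimension such as $R/(x)$, so the $(M,R)$ pairing alone --- your proposed ``efficient branch'' via Proposition~\ref{prop160316a} with $N=R$ --- yields no information about $\pd_R(M)$. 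The remark after equation~\eqref{eq160315a} that $R$ is ``almost never'' Cohen--Macaulay does not rule this case out, and the footnote to Corollary~\ref{cor160325a} exists precisely because the Gorenstein case is possible.

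The repair is short but it is a genuinely missing step: argue by cases on whether $R$ is Gorenstein. If $R$ is not Gorenstein, then $\id_R(R)=\infty$ and your $(M,R)$ argument gives $\pd_R(M)\leq 1$ as intended. If $R$ is Gorenstein, use instead the $(M,M)$ pairing (which you suggest may be dispensable --- it is not): Corollary~\ref{cor160316a} gives $\pd_R(M)\leq 1$ or $\id_R(M)\leq 1$, and over a Gorenstein local ring a finitely generated module of finite injective dimension has finite projective dimension, so either alternative yields $\pd_R(M)\leq 1$ by Auslander--Buchsbaum. With part (a) so established, your splitting argument for part (b) is correct and standard: $\Ext_R^1(M,F_1)=0$ for the free module $F_1$ in a length-one resolution forces the resolution to split, so $M$ is free. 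Note that the paper itself takes a different route, deducing the theorem from Corollary~\ref{cor160316c} (the Ext-index bound) together with \cite[Theorem~2.3]{christensen:asacvc}; your argument, once patched as above, has the virtue of being self-contained.
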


The next result provides yet another cointerpoint the the theme from Section~\ref{sec160319a},
as one can easily construct examples where $S$ and $T$ have non-trivial semidualizing complexes (even modules);
recall that a homologically finite $R$-complex $C$ is \emph{semidualizing} if $\Rhom CC\simeq R$ in $\catd(R)$.

\begin{cor}\label{cor160316d}
The fiber product $R$ has at most two semidualizing complexes, up to shift-isomorphism, namely
$R$ and a dualizing $R$-complex (if $R$ has one).
\end{cor}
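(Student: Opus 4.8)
The plan is to reduce to the known classification of semidualizing complexes of finite homological dimension. Let $C$ be a semidualizing $R$-complex, so that the homothety morphism gives $\Rhom CC\simeq R$ in $\catd(R)$. Passing to homology, this says $\Ext_R^i(C,C)=\HH_{-i}(\Rhom CC)=0$ for every $i\neq 0$; in particular $\Ext_R^i(C,C)=0$ for $i\gg 0$. Since $C$ is homologically finite, Proposition~\ref{prop160316a} applied with $M=N=C$ then forces $\pd_R(C)<\infty$ or $\id_R(C)<\infty$. This is the crux of the argument: the Ext-vanishing results of this section collapse the possible finite homological dimensions of a semidualizing complex down to exactly these two alternatives, and everything else is standard structure theory.

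From here I would split into the two cases and invoke the known structure theory for semidualizing complexes, as developed in Christensen~\cite{christensen:scatac}. In the case $\pd_R(C)<\infty$, a semidualizing complex of finite projective dimension over a local ring is shift-isomorphic to $R$, so $C\simeq\shift^n R$ for some $n\in\bbz$. In the case $\id_R(C)<\infty$, a semidualizing complex of finite injective dimension is, by the characterization of dualizing complexes, a dualizing complex for $R$; and any two dualizing complexes over the local ring $R$ are shift-isomorphic (Hartshorne and Grothendieck~\cite{hartshorne:rad}). Thus $C$ is shift-isomorphic either to $R$ or to a fixed dualizing complex $D$, the latter occurring only when $R$ admits a dualizing complex. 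This yields precisely the asserted bound of at most two semidualizing complexes up to shift-isomorphism.

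The only routine points to confirm are that the homothety interpretation of ``semidualizing'' produces the stated Ext-vanishing and that the two structural facts apply verbatim in the complex (rather than module) setting; the genuinely new input is supplied entirely by Proposition~\ref{prop160316a}. I expect no serious obstacle beyond correctly quoting the finite-homological-dimension classification, since the dichotomy $\pd_R(C)<\infty$ or $\id_R(C)<\infty$ is handed to us directly by the Ext-vanishing machinery of the previous section.
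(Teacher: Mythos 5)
Your proposal is correct and follows essentially the same route as the paper: extract the Ext-vanishing from $\Rhom CC\simeq R$, apply Proposition~\ref{prop160316a} to get $\pd_R(C)<\infty$ or $\id_R(C)<\infty$, then conclude via Christensen's classification in the finite projective dimension case and the definition of dualizing complex (plus uniqueness up to shift) in the finite injective dimension case.
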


\begin{proof}
By definition, if $C$ is a semidualizing $R$-complex, then it is homologically finite such that $\Ext^i_R(C,C)=0$ for all $i\geq 1$.
Thus, Proposition~\ref{prop160316a} implies that $\pd_R(C)<\infty$ or $\id_R(C)<\infty$, that is,
$C\simeq\shift^nR$ for some $n\in\bbz$ (by a result of Christensen~\cite[Theorem~8.1]{christensen:scatac}) or $C$ is dualizing (by definition).
\end{proof}

It is not clear to us when the fiber product $R$ has a dualizing complex.
Of course, if it does, then so do the homomorphic images $S$ and $T$.
On the other hand, if $S$ and $T$ are complete, then so is $R$ by
a result of Grothendieck~\cite[(19.3.2.1)]{grothendieck:ega4-4}, so $R$
has a dualizing complex in this case.
Hence, we pose the following.

\begin{question}\label{q160316a}
If $S$ and $T$ admit dualizing complexes, must $R$ also admit one?
\end{question}

Next, we recall Auslander and Bridger's G-dimension~\cite{auslander:smt}.
A finitely generated $R$-module $G$ is \emph{totally reflexive} if
$\Hom_R(\Hom_R(G,R),R)\cong G$ and
$\Ext^i_R(G,R)=0=\Ext^i_R(\Hom_R(G,R),R)$ for all $i\geq 1$.
Every finitely generated free $R$-module is totally reflexive, so every finitely generated $R$-module $M$
has a resolution by totally reflexive modules.
If $M$ has a bounded resolution by totally reflexive modules, then the G-dimension of $M$ is the length of the
shortest such resolution.
The following result says, in the language of Takahashi~\cite{takahashi:grlr}, that our ring $R$ is ``G-regular''.

\begin{cor}\label{cor160325a}
Assume that the fiber product $R$ is not Gorenstein.\footnote{See~\cite[Section~3]{christensen:gmirlr} for a discussion of this assumption.}
Let $M$ be a finitely generated $R$-module.
Then one has $\pd_R(M)=\gdim_R(M)$.
In particular, $M$ is totally reflexive if and only if it is free.
\end{cor}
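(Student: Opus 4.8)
The plan is to reduce the equality $\pd_R(M)=\gdim_R(M)$ to the single statement that every totally reflexive $R$-module is free, and then to extract the latter from the Ext-vanishing results of this section. First I recall the standard inequality $\gdim_R(M)\leq\pd_R(M)$, which is an equality whenever $\pd_R(M)<\infty$. Consequently the asserted equality holds automatically both when $\pd_R(M)<\infty$ and when $\gdim_R(M)=\infty$; the only scenario requiring work is $\gdim_R(M)<\infty=\pd_R(M)$, which I aim to rule out. If $\gdim_R(M)=g<\infty$, then a $g$-th syzygy $G$ of $M$ is totally reflexive, so once totally reflexive modules are known to be free, I obtain that $G$ is free and hence $\pd_R(M)\leq g<\infty$, a contradiction. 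This same reduction yields the final sentence of the statement directly.

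So let $G$ be a totally reflexive $R$-module. By definition $\Ext^i_R(G,R)=0$ for all $i\geq 1$, and in particular for $i\gg 0$. I would then invoke Proposition~\ref{prop160316a} (equivalently, Corollary~\ref{cor160316a}) applied to the pair $(G,R)$ to conclude that $\pd_R(G)<\infty$ or $\id_R(R)<\infty$. The second alternative asserts precisely that $R$ is Gorenstein, which is excluded by hypothesis; hence $\pd_R(G)<\infty$. Finally, a totally reflexive module of finite projective dimension is free: the finiteness $\pd_R(G)<\infty$ forces $\pd_R(G)=\gdim_R(G)=0$, so $G$ is free. This closes the reduction, and therefore the proof.

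The main obstacle here is conceptual rather than computational: one must correctly marshal the standard homological facts about G-dimension---namely that $\gdim_R\leq\pd_R$ with equality in the finite case, and that the $g$-th syzygy of a module of G-dimension $g$ is totally reflexive---together with the characterization of the Gorenstein property as finiteness of $\id_R(R)$. Once these are in place, the essential input is the Ext-vanishing dichotomy of Proposition~\ref{prop160316a}, applied with $N=R$, which converts the total-reflexivity condition $\Ext^i_R(G,R)=0$ into the sharp alternative that either $G$ has finite projective dimension or $R$ is Gorenstein.
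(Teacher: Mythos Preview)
Your argument is correct and follows essentially the same route as the paper: use the Ext-vanishing against $R$ afforded by finite G-dimension, apply Corollary~\ref{cor160316a} (or Proposition~\ref{prop160316a}) to obtain $\pd_R<\infty$ or $\id_R(R)<\infty$, and discard the second alternative because $R$ is not Gorenstein. The only difference is organizational: the paper applies the dichotomy directly to $M$ (since $\gdim_R(M)<\infty$ already gives $\Ext^i_R(M,R)=0$ for $i\gg 0$), whereas you first pass to a totally reflexive syzygy $G$---a harmless but unnecessary detour.
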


\begin{proof}
By~\cite[4.13]{auslander:smt}, it suffices to assume that $\gdim_R(M)<\infty$,
and prove that $\pd_R(M)<\infty$.
This assumption implies that we have $\Ext^i_R(M,R)=0$ for $i\gg 0$,
so Corollary~\ref{cor160316a} implies that $\pd_R(M)<\infty$ or $\id_R(R)<\infty$.
Since $R$ is not Gorenstein, it follows that $\pd_R(M)<\infty$, as desired.
\end{proof}

\section*{Acknowledgments}
We are grateful to Luchezar L.\ Avramov and and W.\ Frank Moore for useful conversations about this work.


\providecommand{\bysame}{\leavevmode\hbox to3em{\hrulefill}\thinspace}
\providecommand{\MR}{\relax\ifhmode\unskip\space\fi MR }
\providecommand{\MRhref}[2]{%
  \href{http://www.ams.org/mathscinet-getitem?mr=#1}{#2}
}
\providecommand{\href}[2]{#2}

\end{document}